\numberwithin{equation}{section}
\newtheorem{theorem}{Theorem}[section]
\newtheorem{corollary}[theorem]{Corollary}
\newtheorem{definition}[theorem]{Definition}
\newtheorem{lemma}[theorem]{Lemma}
\begin{document}
\baselineskip=16pt

\title{The maximum $A_{\alpha}$-spectral radius of $t$-connected graphs with bounded matching number}

\author{\textbf{Chang Liu, Zimo Yan, Jianping Li\textsuperscript{\thanks{Corresponding author.}}}\\
	\small \emph{College of Liberal Arts and Sciences, National University of Defense Technology,}\\[-0.8ex]
	\small \emph{Changsha, Hunan, 410073, P. R. China}\\[-0.4ex]
		\small\tt clnudt19@126.com\\
		\small\tt yanzimo20@nudt.edu.cn\\
		\small\tt lijianping65@nudt.edu.cn\\
	}

\date{\today}

\maketitle

\begin{abstract}
	Let $G$ be a graph with adjacency matrix $A(G)$ and let $D(G)$ be a diagonal matrix of the degrees of $G$. In \cite{Niki1}, Nikiforov defined the $A_{\alpha}$-matrix of $G$ as
	\begin{equation*}
		A_{\alpha}(G)=\alpha D(G)+(1-\alpha)A(G),
	\end{equation*}
	where $\alpha\in[0,1]$ is an arbitrary real number. The largest eigenvalue of $A_{\alpha}(G)$ is called the $A_{\alpha}$-spectral radius of $G$. Let $n$, $t$, $k$ be positive integers, satisfying $t\geq1$, $k\geq2$, $n\geq k+2$, and $n\equiv k$ (mod $2$). In this paper, for $\alpha\in[0,\frac{1}{2}]$, we determine the extremal graphs with the maximum $A_{\alpha}$-spectral radius among all $t$-connected graphs on $n$ vertices with matching number $\frac{n-k}{2}$ at most. This generalizes some results of O (2021) \cite{O21} and Zhang (2022) \cite{Wzhang}.
	\\[2pt]
	\textbf{AMS Subject Classification:} 05C50; 05C70\\[2pt]
	\textbf{Keywords:} $A_{\alpha}$-spectral radius; Matchings number; Connectivity
\end{abstract}

\section{Introduction}
All graphs considered here are simple, undirected and connected. Let $G$ be an $n$-vertex graph with vertex set $V(G)=\left\lbrace v_1,v_2,\cdots,v_n \right\rbrace $ and edge set $E(G)$. The adjacency matrix of $G$ is defined as $A(G)=(a_{ij})_{n\times n}$, where $a_{ij}=1$ if $v_iv_j\in E(G)$, and $a_{ij}=0$ otherwise. Let $D(G)$ be the diagonal matrix of the degrees of $G$. In \cite{Niki1}, Nikiforov defined the $A_{\alpha}$-matrix of $G$ as
\begin{equation*}
	A_{\alpha}(G)=\alpha D(G)+ (1-\alpha) A(G),
\end{equation*}
where $\alpha\in[0,1]$ is an arbitrary real number. It is clear that $A_{0}(G)=A(G)$ is the adjacency matrix of $G$, and $2A_{\frac{1}{2}}(G)=D(G)+A(G)=Q(G)$ is the signless Laplacian matrix of $G$. The eigenvalues of $A_{\alpha}(G)$ are called the $A_{\alpha}$-eigenvalues of $G$, and the largest of them, denoted by $\rho_{\alpha}(G)$, is called the $A_{\alpha}$-spectral radius of $G$. By Perron-Frobenius Theorem, the $A_{\alpha}$-spectral radius $\rho_{\alpha}(G)$ of a connected graph $G$ is simple and has an unit positive eigenvector. For some latest interesting results of the $A_{\alpha}$-matrix, the reader is invited to read the following recent papers \cite{Lin1,Lin2,Lin3,Liu1,Niki2,Niki3} and references therein.

A matching $M$ of $G$ is a set of disjoint edges of $G$, and the maximum size of a matching in $G$, denoted by $\mu(G)$ in here, is called the matching number of $G$. If each vertex of $G$ is incident with an edge in $M$, we call $M$ perfect. Obviously, a graph containing a perfect matching has an even number of vertices and $\mu(G)=\frac{|V(G)|}{2}$. Studies on connections between eigenvalues and the matching number of a graph emerged in the 1990s, and there have been abundant publications up to now, see \cite{Brou,Chang,Feng,Hou}. Recently, O \cite{O21} provided a lower bound for the adjacency spectral radius which guarantees the existence of a perfect matching in a connected graph $G$. Later, Liu et al. \cite{Liuc} and Zhao et al. \cite{Zhao} considered this problem with respect to the signless Laplacian spectral radius and $A_{\alpha}$-spectral radius, respectively.


The matching number of a graph is closely related to its (edge-) connectivity. The (edge-) connectivity of $G$ is the minimum number of vertices (edges) of which the deletion induces a non-connected graph or a single vertex. If the (edge-) connectivity of $G$ is at least $t$, where $t\geq0$, we say that $G$ is $t$-connected ($t$-edge-connected). In 2010,  Cioab\v{a} and O \cite{Cioa1} gave a sharp upper bound for the third largest eigenvalue of a $t$-edge-connected regular graph $G$ to guarantee that $G$ has a perfect matching, generalizing their results in \cite{Cioa2}. Very Recently, Zhang \cite{Wzhang} has studied the extremal graphs with maximum spectral radius among all $t$-connected graphs on $n$ vertices with matching number at most $\frac{n-k}{2}$, which generalized the results in \cite{O21}.

Along this line, we intend to investigate the maximum $A_{\alpha}$-spectral radius of $t$-connected graphs with bounded matching number. For a vertex subset $S\subseteq V(G)$, the subgraphs induced by the vertex set $V(G)-S$ and  $S$ are denoted by $G-S$ and  $G[S]$, respectively. The complement of $G$ is denoted by $\overline{G}$. For any two graphs $G_1$ and $G_2$, the joint of $G_1$ and $G_2$, denoted by $G_1\vee G_2$, is a graph whose vertex set $V(G_1\vee G_2)=V(G_1)\cup V(G_2)$ and edge set $E(G_1\vee G_2)=E(G_1)\cup E(G_2)\cup \left\lbrace uv|~u\in V(G_1), v\in V(G_2) \right\rbrace $. The disjoint of $G_1$ and $G_2$, denoted by $G_1\cup G_2$, is a graph with vertex set $V(G_1\cup G_2)=V(G_1)\cup V(G_2)$ and edge set $E(G_1\cup G_2)=E(G_1)\cup E(G_2)$.

In this paper, for $\alpha\in[0,\frac{1}{2}]$, we determine the extremal graphs with the maximum $A_{\alpha}$-spectral radius among all $t$-connected graphs on $n$ vertices with matching number $\frac{n-k}{2}$ at most. This generalizes some results of O \cite{O21} and Zhang \cite{Wzhang}.
\section{Preliminaries}

First, we present some essential lemmas in matrix theory. For a matrix $B$, we denote the largest eigenvalue of $B$ by $\lambda(B)$.

\begin{lemma}[Rayleigh-Ritz theorem]\label{symm}
	If $B$ is an $n\times n$ real symmetric matrix, then $\lambda(B)=\max\limits_{\textbf{x}\in \mathbb{R}^n}\dfrac{\textbf{x}^tB\textbf{x}}{\textbf{x}^{t}\textbf{x}}$.
\end{lemma}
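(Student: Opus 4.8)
The plan is to reduce everything to the spectral decomposition of $B$ and then evaluate the Rayleigh quotient in an eigenbasis. Because $B$ is real symmetric, the spectral theorem supplies real eigenvalues $\lambda_1\geq\lambda_2\geq\cdots\geq\lambda_n$, with $\lambda(B)=\lambda_1$, together with a corresponding orthonormal basis $\mathbf{u}_1,\mathbf{u}_2,\ldots,\mathbf{u}_n$ of $\mathbb{R}^n$ satisfying $B\mathbf{u}_i=\lambda_i\mathbf{u}_i$ and $\mathbf{u}_i^t\mathbf{u}_j=\delta_{ij}$. This is the one external input, and invoking it is the only non-computational step; everything afterward is a direct calculation.

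Next I would take an arbitrary nonzero $\mathbf{x}\in\mathbb{R}^n$ and expand it as $\mathbf{x}=\sum_{i=1}^n c_i\mathbf{u}_i$, where $c_i=\mathbf{u}_i^t\mathbf{x}$. Using orthonormality, I compute the denominator $\mathbf{x}^t\mathbf{x}=\sum_{i=1}^n c_i^2$ and the numerator $\mathbf{x}^tB\mathbf{x}=\sum_{i=1}^n\lambda_i c_i^2$, since $B\mathbf{x}=\sum_i\lambda_i c_i\mathbf{u}_i$ and the cross terms $\mathbf{u}_i^t\mathbf{u}_j$ vanish for $i\neq j$.

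Then the upper bound follows at once: because each $\lambda_i\leq\lambda_1$ and each $c_i^2\geq0$, we have $\sum_i\lambda_i c_i^2\leq\lambda_1\sum_i c_i^2$, so for every nonzero $\mathbf{x}$,
\[
\frac{\mathbf{x}^tB\mathbf{x}}{\mathbf{x}^t\mathbf{x}}=\frac{\sum_{i=1}^n\lambda_i c_i^2}{\sum_{i=1}^n c_i^2}\leq\lambda_1=\lambda(B).
\]
Finally, to see that the maximum is attained, I would substitute $\mathbf{x}=\mathbf{u}_1$, which gives $\mathbf{u}_1^tB\mathbf{u}_1=\lambda_1$ and $\mathbf{u}_1^t\mathbf{u}_1=1$, hence the quotient equals $\lambda_1$. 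Combining the inequality with this equality yields $\lambda(B)=\max_{\mathbf{x}\in\mathbb{R}^n}\frac{\mathbf{x}^tB\mathbf{x}}{\mathbf{x}^t\mathbf{x}}$, as claimed. Since this is a classical fact, there is no genuine obstacle beyond correctly citing the spectral theorem; the only point requiring a word of care is that the maximum is taken over nonzero $\mathbf{x}$ (the quotient being undefined at the origin), a restriction that the eigenvector witness $\mathbf{u}_1$ respects.
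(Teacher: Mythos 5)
Your proof is correct and complete: it is the standard spectral-theorem argument (orthonormal eigenbasis expansion, termwise bound $\lambda_i\leq\lambda_1$, attainment at $\mathbf{u}_1$), and the paper itself states this lemma without proof, citing it as the classical Rayleigh--Ritz theorem, so there is no competing argument to compare against. Your side remark is also apt: as printed, the maximum should be taken over nonzero $\mathbf{x}\in\mathbb{R}^n$, a point the paper's statement glosses over and your eigenvector witness $\mathbf{u}_1$ handles correctly.
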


\begin{lemma}\label{larger}
	Let $G$ be a connected graph and let $H$ be a connected subgraph of $G$. For $\alpha\in[0,1]$, we have $\rho_{\alpha}(H)\leq\rho_{\alpha}(G)$ with equality holding if and only if $H\cong G$.
\end{lemma}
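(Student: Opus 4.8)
The plan is to combine the Rayleigh-Ritz variational characterization of Lemma~\ref{symm} with the quadratic-form expression of the $A_{\alpha}$-matrix and the strict positivity of the Perron eigenvector supplied by the Perron-Frobenius theorem. First I would record the identity that for any real vector $\mathbf{x}$ indexed by $V(G)$,
\begin{equation*}
	\mathbf{x}^t A_{\alpha}(G)\mathbf{x}=\sum_{uv\in E(G)}\left[\alpha\left(x_u^2+x_v^2\right)+2(1-\alpha)x_ux_v\right],
\end{equation*}
which follows by substituting $A_{\alpha}(G)=\alpha D(G)+(1-\alpha)A(G)$ and expanding $\mathbf{x}^tD(G)\mathbf{x}=\sum_{uv\in E(G)}(x_u^2+x_v^2)$ together with $\mathbf{x}^tA(G)\mathbf{x}=2\sum_{uv\in E(G)}x_ux_v$. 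This edge-indexed form is the workhorse of the whole argument.

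For the inequality itself I would take a unit positive Perron eigenvector $\mathbf{x}$ of $A_{\alpha}(H)$, whose entries are strictly positive because $H$ is connected, and extend it to a test vector $\widetilde{\mathbf{x}}$ on $V(G)$ by assigning the value $0$ to each vertex of $V(G)\setminus V(H)$. This preserves the normalization $\widetilde{\mathbf{x}}^t\widetilde{\mathbf{x}}=1$ and keeps $\widetilde{\mathbf{x}}$ entrywise nonnegative. Evaluating the identity above at $\widetilde{\mathbf{x}}$ over the edges of $G$, the edges lying in $H$ reproduce exactly $\mathbf{x}^tA_{\alpha}(H)\mathbf{x}=\rho_{\alpha}(H)$, while every remaining edge of $G$ contributes a term $\alpha(\widetilde{x}_u^2+\widetilde{x}_v^2)+2(1-\alpha)\widetilde{x}_u\widetilde{x}_v\ge0$, since $\alpha\in[0,1]$ and all entries of $\widetilde{\mathbf{x}}$ are nonnegative. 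Hence $\widetilde{\mathbf{x}}^tA_{\alpha}(G)\widetilde{\mathbf{x}}\ge\rho_{\alpha}(H)$, and Lemma~\ref{symm} yields $\rho_{\alpha}(G)\ge\widetilde{\mathbf{x}}^tA_{\alpha}(G)\widetilde{\mathbf{x}}\ge\rho_{\alpha}(H)$.

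The part that needs genuine care, and where I expect the main (if modest) obstacle to lie, is the equality case, so I would concentrate the argument there. Assuming $\rho_{\alpha}(H)=\rho_{\alpha}(G)$, both inequalities in the chain must be tight. Tightness of the Rayleigh quotient forces $\widetilde{\mathbf{x}}$ to be a Perron eigenvector of $A_{\alpha}(G)$; but $G$ is connected, so by Perron-Frobenius its Perron eigenvector has no zero entry, which is incompatible with $V(G)\setminus V(H)\ne\emptyset$ and therefore gives $V(H)=V(G)$. Given $V(H)=V(G)$, tightness of the second inequality forces each edge of $E(G)\setminus E(H)$ to contribute $0$; yet on any such edge both endpoints lie in $V(H)$, so $x_u,x_v>0$, and the contribution $\alpha(x_u^2+x_v^2)+2(1-\alpha)x_ux_v$ is strictly positive for every $\alpha\in[0,1]$, since its two summands cannot vanish simultaneously (that would demand $\alpha=0$ and $\alpha=1$ at once). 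The subtle point to get right is exactly this uniform strict positivity across the whole interval $[0,1]$, handling the endpoints without a case split. It follows that $E(G)\setminus E(H)=\emptyset$, hence $H=G$; the converse is immediate, completing the argument.
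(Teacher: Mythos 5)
Your argument is, at its core, the same as the paper's: the paper also extends a unit positive eigenvector of $A_{\alpha}(H)$ by zeros to $V(G)$ (packaging $A_{\alpha}(H)$ as a zero-padded $n\times n$ block matrix $B$) and applies Rayleigh--Ritz to obtain $\rho_{\alpha}(G)-\rho_{\alpha}(H)\geq \mathbf{x}^{t}\left(A_{\alpha}(G)-B\right)\mathbf{x}\geq 0$, where the last step uses entrywise nonnegativity of $A_{\alpha}(G)-B$ --- exactly your edge-by-edge expansion of the quadratic form, written in matrix language. Where you differ is the equality case: the paper merely asserts ``equality iff $H\cong G$,'' whereas you actually prove it (tightness of the Rayleigh quotient forces $\widetilde{\mathbf{x}}$ into the top eigenspace of $A_{\alpha}(G)$, Perron--Frobenius positivity rules out $V(G)\setminus V(H)\neq\emptyset$, and strict positivity of each per-edge contribution rules out $E(G)\setminus E(H)\neq\emptyset$). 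On that point your write-up is more complete than the paper's.

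However, your claim to cover ``the whole interval $[0,1]$\dots without a case split'' fails at the endpoint $\alpha=1$, and this is a genuine gap relative to the statement as written. At $\alpha=1$ we have $A_{1}(G)=D(G)$, a diagonal and hence reducible matrix, so Perron--Frobenius supplies neither a strictly positive top eigenvector of $A_{1}(H)$ (for $H=P_3$, $D=\mathrm{diag}(1,2,1)$ has top eigenvector $(0,1,0)^{t}$) nor simplicity and positivity for the top eigenvector of $A_{1}(G)$; both of your Perron-positivity steps collapse, even though the inequality $\rho_{\alpha}(H)\leq\rho_{\alpha}(G)$ itself survives with a merely nonnegative eigenvector. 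The defect cannot be patched, because the equality characterization is actually false at $\alpha=1$: there $\rho_{1}(G)=\Delta(G)$, so $H=P_3$ inside $G=P_4$ gives $\rho_{1}(H)=\rho_{1}(G)=2$ with $H\not\cong G$. (Your observation that the per-edge term $\alpha(x_u^2+x_v^2)+2(1-\alpha)x_ux_v$ is positive on all of $[0,1]$ once $x_u,x_v>0$ is correct --- the failure is upstream, in obtaining positive entries at all.) The honest scope of your argument is $\alpha\in[0,1)$, where $A_{\alpha}$ of a connected graph is irreducible; this costs nothing downstream, since the paper only invokes the lemma for $\alpha\in[0,1)$, and the paper's own proof, which presumes a positive unit eigenvector of $A_{\alpha}(H)$, silently carries the same restriction.
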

\begin{proof}
	Let $n_0=|V(H)|$ and let $\boldsymbol{x}_{H}=(x^{H}_{i})^{t}_{i=1,\dots,n_0}$ be an unit positive eigenvector of $H$ corresponding to $\rho_{\alpha}(H)$. Then we construct the square matrix
	\begin{equation*}
		B=\begin{bmatrix}
			A_{\alpha}(H) & \boldsymbol{0} \\
			\boldsymbol{0} & \boldsymbol{0}
		\end{bmatrix}_{n\times n},
	\end{equation*}
and the vector $\boldsymbol{x}=(x^{H}_{1},\dots,x^{H}_{n_0},\underbrace{0,\dots,0}_{n-n_0})^{t}$. Obviously, $\lambda(B)=\lambda(A_{\alpha}(H))=\rho_{\alpha}(H)$, and $\boldsymbol{x}$ is the eigenvector of $B$ corresponding to $\lambda(B)$. It can be checked that
\begin{align*}
	\rho_{\alpha}(G)-\rho_{\alpha}(H)&=	\lambda(A_{\alpha}(G))-\lambda(B)\\
	&\geq \boldsymbol{x}^{t}\left( A_{\alpha}(G)-B\right) \boldsymbol{x}\\
	&\geq 0
\end{align*}
with equality holding if and only if $H\cong G$. This completes the proof.
\end{proof}

Next, we introduce the concepts of equitable matrices and equitable partitions.
\begin{definition}
	\label{def1}\cite{aebrouw}
	Let $B$ be a symmetric real matrix of order $n$ whose rows and columns are indexed by $P=\left\lbrace1,2,\cdots,n \right\rbrace $, where $\left\lbrace P_1,P_2,\cdots,P_r \right\rbrace $ is a partition of $P$ with $n_i=|P_i|$ and $n=n_1+n_2+\cdots+n_r$. Let $B$ be a matrix with the partition $\left\lbrace P_1,P_2,\cdots,P_r \right\rbrace$, i.e.
	\begin{equation*}
		M=\begin{pmatrix}
			B_{1,1} & B_{1,2} & \cdots & B_{1,r}\\
			B_{2,1} & B_{2,2} & \cdots & B_{2,r}\\
			\vdots & \vdots & \ddots & \vdots\\
			B_{r,1} & B_{r,2} & \cdots & B_{r,r}
		\end{pmatrix}_{n\times n},
	\end{equation*}
	where the blocks $B_{i,j}$ denotes the submatrix of $B$ formed by rows in $P_i$ and the $P_j$ columns. Let $m_{i,j}$ denote the average row sum of $B_{i,j}$. Then the matrix $M=(m_{i,j})$ is called the quotient matrix of $B$ under the given partition. Particularly, if the row sum of each submatrix $B_{i,j}$ is constant, then the partition is called equitable.
\end{definition}

\begin{lemma}\cite{lhYou}\label{equit}
	Let $M$ be an equitable quotient matrix of $B$ as defined in Definition \ref{def1}. If $B$ is a nonnegative matrix, then $\lambda(M)=\lambda(B)$.
\end{lemma}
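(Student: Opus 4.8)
The plan is to realize the quotient matrix $M$ as the action of $B$ on the subspace of vectors that are constant on the parts $P_1,\dots,P_r$, and then to play the largest eigenvalue off against the spectral radius using the nonnegativity hypothesis. Concretely, I would first introduce the $n\times r$ characteristic matrix $S=(s_{v,i})$ of the partition, where $s_{v,i}=1$ if $v\in P_i$ and $s_{v,i}=0$ otherwise. The defining property of an equitable partition---that each block $B_{i,j}$ has constant row sum $m_{i,j}$---is exactly the intertwining identity $BS=SM$; indeed, for $v\in P_i$ one computes $(BS)_{v,j}=\sum_{w\in P_j}B_{v,w}=m_{i,j}=(SM)_{v,j}$. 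Because $B$ is symmetric one has $n_i m_{i,j}=n_j m_{j,i}$, so $M$ is similar, via $\mathrm{diag}(\sqrt{n_1},\dots,\sqrt{n_r})$, to a symmetric matrix; hence $M$ has only real eigenvalues and $\lambda(M)$ is well defined. Since $B\ge 0$ and $M\ge 0$, the Perron--Frobenius theorem guarantees that the largest eigenvalue of each coincides with its spectral radius, so it suffices to establish $\lambda(M)=\lambda(B)$ by two inequalities.

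For the inequality $\lambda(M)\le\lambda(B)$ I would use the intertwining identity directly: if $M\mathbf{y}=\mu\mathbf{y}$ with $\mathbf{y}\ne\mathbf{0}$, then $B(S\mathbf{y})=S(M\mathbf{y})=\mu\,S\mathbf{y}$, and $S\mathbf{y}\ne\mathbf{0}$ because the columns of $S$ have pairwise disjoint supports, so $S$ has full column rank. Thus every eigenvalue of $M$ is an eigenvalue of $B$, giving $\lambda(M)\le\lambda(B)$. For the reverse inequality $\lambda(B)\le\lambda(M)$ I would exploit nonnegativity. Let $\mathbf{x}\ge\mathbf{0}$ be a Perron eigenvector of $B$ with $B\mathbf{x}=\lambda(B)\mathbf{x}$, and define $\mathbf{y}\in\mathbb{R}^r$ by $y_i=\max_{v\in P_i}x_v$. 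Fixing for each $i$ a vertex $v_i\in P_i$ attaining the maximum, the equitable row-sum condition yields, for $v=v_i$,
\[
\lambda(B)\,y_i=(B\mathbf{x})_{v_i}=\sum_{j}\sum_{w\in P_j}B_{v_i,w}\,x_w\le\sum_j m_{i,j}\,y_j=(M\mathbf{y})_i,
\]
so that $M\mathbf{y}\ge\lambda(B)\mathbf{y}$ entrywise with $\mathbf{y}\ge\mathbf{0}$, $\mathbf{y}\ne\mathbf{0}$. The Collatz--Wielandt (subinvariance) form of Perron--Frobenius then forces the spectral radius of $M$, namely $\lambda(M)$, to satisfy $\lambda(M)\ge\lambda(B)$. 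Combining the two bounds gives $\lambda(M)=\lambda(B)$.

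The step I expect to be most delicate is this reverse inequality, specifically the justification that a nonnegative vector satisfying $M\mathbf{y}\ge\lambda(B)\mathbf{y}$ certifies $\lambda(M)\ge\lambda(B)$ even when $B$ (equivalently $M$) is reducible, so that one cannot assume the Perron eigenvector is strictly positive. The choice $y_i=\max_{v\in P_i}x_v$ is designed precisely to avoid needing positivity: it converts each scalar equation $(B\mathbf{x})_{v_i}=\lambda(B)x_{v_i}$ into an entrywise lower bound for $M\mathbf{y}$ using only the constancy of block row sums, after which monotonicity of the spectral radius under entrywise domination closes the argument. An alternative that also covers the general case is to observe that $\mathrm{col}(S)$ is $B$-invariant and apply Lemma \ref{symm} to the compression of $B$ to this subspace; but that route still requires Perron--Frobenius to locate the top eigenvector of $B$ inside $\mathrm{col}(S)$, so the direct subinvariance argument above is the one I would carry out.
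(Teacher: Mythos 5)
Your proposal is correct, but note that the paper offers no proof of this lemma at all: it is quoted verbatim from the cited reference \cite{lhYou} (You, Yang, So, Xi), so the comparison is really with the standard argument in that reference rather than with anything in this paper. Your first half --- encoding the equitable partition as the intertwining relation $BS=SM$ with the characteristic matrix $S$, so that every eigenvalue of $M$ is an eigenvalue of $B$ and hence $\lambda(M)\le\lambda(B)$ --- is exactly the standard route. Your second half is where you add genuine value: rather than lifting the Perron vector of $M$ to $S\mathbf{y}\ge 0$ and trying to identify its eigenvalue with $\rho(B)$ (which is delicate precisely when $B$ is reducible, since a nonnegative eigenvector of a reducible nonnegative matrix need not belong to the spectral radius --- consider $\mathrm{diag}(1,2)$), you compress the Perron vector $\mathbf{x}$ of $B$ to $y_i=\max_{v\in P_i}x_v$ and obtain the subinvariance $M\mathbf{y}\ge\lambda(B)\mathbf{y}$, which via Collatz--Wielandt gives $\lambda(M)\ge\lambda(B)$ without any irreducibility assumption; the iteration $M^m\mathbf{y}\ge\lambda(B)^m\mathbf{y}$ together with Gelfand's formula makes this step airtight for reducible $M$ as well. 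The remaining identifications you rely on are all in order: $\lambda(B)=\rho(B)$ because $B$ is symmetric nonnegative, $\lambda(M)=\rho(M)$ because $M$ is nonnegative with real spectrum (your diagonal similarity $\mathrm{diag}(\sqrt{n_1},\dots,\sqrt{n_r})$, justified by $n_im_{i,j}=n_jm_{j,i}$ from the symmetry of $B$), and $\mathbf{y}\ne\mathbf{0}$ since $\mathbf{x}\ne\mathbf{0}$. In short: a complete, self-contained proof, slightly more robust than the bare citation the paper gives, and correctly attentive to the one point (reducibility) where a naive Perron--Frobenius appeal would fail.
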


\begin{theorem}[Berge-Tutte Formula]\label{betu}
	Let $G$ be a graph with matching number $\mu(G)$. Then 
	\begin{equation*}
		\mu(G)=\frac{n-\max_{S\subseteq V(G)}(o(G-S)-|S|)}{2},
	\end{equation*}
where $S$ is a subset of $V(G)$, and $o(G-S)$ is the number of odd components of $G-S$.
\end{theorem}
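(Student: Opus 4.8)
The plan is to recast the claimed identity in terms of the \emph{deficiency} $\mathrm{def}(G):=n-2\mu(G)$, the number of vertices left uncovered by a maximum matching of $G$. The theorem is then equivalent to
\begin{equation*}
	\mathrm{def}(G)=\max_{S\subseteq V(G)}\bigl(o(G-S)-|S|\bigr)=:d(G),
\end{equation*}
and I would establish the two inequalities $\mathrm{def}(G)\geq d(G)$ and $\mathrm{def}(G)\leq d(G)$ separately.

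For the lower bound $\mathrm{def}(G)\geq d(G)$, fix a maximum matching $M$ and an arbitrary $S\subseteq V(G)$. Each odd component $C$ of $G-S$ has odd order, so $M$ cannot saturate $C$ using edges inside $C$ alone; hence at least one vertex of $C$ is either unmatched or matched by $M$ to a vertex of $S$. Since $S$ can absorb at most $|S|$ of these components, at least $o(G-S)-|S|$ odd components each contribute an uncovered vertex. Thus $\mathrm{def}(G)\geq o(G-S)-|S|$, and maximizing over $S$ gives $\mathrm{def}(G)\geq d(G)$.

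For the reverse inequality I would use the classical reduction to Tutte's $1$-factor theorem. First note the parity fact $n\equiv|S|+o(G-S)\pmod 2$, which forces $o(G-S)-|S|\equiv n\pmod 2$ for every $S$, hence $d(G)\equiv n\pmod 2$; also $d(G)\geq0$ on taking $S=\varnothing$. Form the auxiliary graph $G'=G\vee\overline{K_{d(G)}}$ by adding an independent set $U$ of $d(G)$ new vertices, each adjacent to all of $V(G)$; then $|V(G')|=n+d(G)$ is even. I would verify Tutte's condition $o(G'-T)\leq|T|$ for every $T\subseteq V(G')$ by splitting on whether $U\subseteq T$. If $U\subseteq T$, write $T=U\cup S$ with $S\subseteq V(G)$; then $G'-T=G-S$, so $o(G'-T)=o(G-S)\leq|S|+d(G)=|T|$. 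If $U\not\subseteq T$, pick $u\in U\setminus T$: either $V(G)\setminus T\neq\varnothing$, in which case $u$ joins all surviving vertices into a single component, or $V(G)\subseteq T$, in which case $G'-T$ consists only of the isolated vertices of $U\setminus T$; a direct count (using $d(G)\leq n$) yields $o(G'-T)\leq|T|$ in both subcases. By Tutte's theorem $G'$ has a perfect matching $M'$; discarding the at most $|U|=d(G)$ edges of $M'$ meeting $U$ leaves a matching of $G$ that omits at most $d(G)$ vertices, so $\mathrm{def}(G)\leq d(G)$.

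Combining the two bounds gives $\mathrm{def}(G)=d(G)$, that is, $\mu(G)=\tfrac{1}{2}\bigl(n-d(G)\bigr)$, as required. The crux of the argument is the upper bound, and within it the verification of Tutte's condition for $G'$, especially the case analysis when $U\not\subseteq T$. If one prefers a self-contained treatment rather than citing Tutte's $1$-factor theorem, that theorem can be inserted here via its standard proof by induction on $|E(G)|$: one takes a counterexample that is edge-maximal subject to satisfying Tutte's condition, analyzes the set of vertices adjacent to all others, and derives a contradiction from the resulting component structure.
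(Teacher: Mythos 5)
The paper does not actually prove Theorem \ref{betu}: the Berge--Tutte formula is quoted there as a classical preliminary, with no argument supplied, so your proof can only be judged on its own merits. On those merits it is correct, and it is the canonical derivation: the lower bound $\mathrm{def}(G)\ge o(G-S)-|S|$ by observing that each odd component of $G-S$ contributes a vertex that is either exposed or matched into $S$, and the upper bound by adjoining an independent set $U$ of $d(G)$ universal vertices and invoking Tutte's $1$-factor theorem for $G'=G\vee\overline{K_{d(G)}}$, whose order is even thanks to the parity fact $o(G-S)-|S|\equiv n \pmod 2$. One spot deserves tightening: in the subcase $U\not\subseteq T$ with $V(G)\setminus T\neq\varnothing$, the graph $G'-T$ is a single component, and the needed inequality $o(G'-T)\le |T|$ is not a ``direct count using $d(G)\le n$'' --- for $|T|\ge 1$ it is immediate from $o(G'-T)\le 1$, and for $T=\varnothing$ it follows from parity, since the unique component then has even order $n+d(G)$; the bound $d(G)\le n$ is what you actually need only in the subcase $V(G)\subseteq T$, where $o(G'-T)\le d(G)-|U\cap T|\le n+|U\cap T|=|T|$. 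The final step is also sound once you note that $U$ is independent, so every edge of the perfect matching $M'$ meeting $U$ has exactly one endpoint in $U$; deleting these $d(G)$ edges leaves a matching of $G$ exposing exactly $d(G)$ vertices, giving $\mathrm{def}(G)\le d(G)$. In short: the paper treats this statement at citation level, and your self-contained argument (optionally completing Tutte's theorem itself via the edge-maximal counterexample induction you sketch) correctly supplies the proof the paper omits.
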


\section{Main results}
 From Fact 1 in \cite{Wzhang}, one see that if $G$ is an $n$-vertex connected graph with connectivity $t$ and matching number $\mu(G)<\lfloor\frac{n}{2}\rfloor$, then $t\leq \mu(G)$. Moreover, for a subset $S$ of $V(G)$, we have $t\leq|S|\leq\mu(G)$. First, we prove the following lemma.

\begin{lemma}
	Let $n$, $t$, $s$, $k$ be four positive integers, where $2\leq k\leq n-2$, $1\leq t\leq s \leq \frac{n-k}{2}$, and $n\equiv k$ (mod $2$). For a $t$-connected graph $G$ on $n$ vertices with matching number $\mu(G)\leq\frac{n-k}{2}$, we have $\rho_{\alpha}(G)\leq\rho_{\alpha}(K_{s}\vee(K_{n+1-2s-k}\cup\overline{K_{s+k-1}}))$ for $\alpha\in[0,1)$. Equality holds if and only if $G\cong K_{s}\vee(K_{n+1-2s-k}\cup\overline{K_{s+k-1}})$.
\end{lemma}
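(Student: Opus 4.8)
The plan is to combine the Berge--Tutte formula with a sequence of edge additions and clique merges, each justified either by the subgraph monotonicity of Lemma~\ref{larger} or by a direct Perron-eigenvector comparison. Since $\mu(G)\le\frac{n-k}{2}$, Theorem~\ref{betu} furnishes a set $S\subseteq V(G)$ with $o(G-S)-|S|\ge k$, and I set $s:=|S|$. Because $o(G-S)\ge s+k\ge 3>1$, the set $S$ is a genuine vertex cut of the connected graph $G$, so $t$-connectivity forces $s\ge t$, while the remark preceding the lemma gives $s\le\mu(G)\le\frac{n-k}{2}$; thus $s$ lands in the stated range. The congruence $n\equiv k\pmod 2$ together with the identity $o(G-S)-|S|\equiv n\pmod 2$ will let me track parities throughout. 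To obtain both the inequality and the equality case at once, I would take $G$ to be a maximizer of $\rho_\alpha$ among the admissible graphs and show that every improving move below forces $G$ to already be the canonical graph.

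First I would add all edges that leave $G-S$ untouched: complete $G[S]$ to $K_s$ and join every vertex of $S$ to every vertex of $V(G)\setminus S$. These additions change neither the graph $G-S$ nor $o(G-S)$, so $\max_T(o(G-T)-|T|)\ge o(G-S)-|S|\ge k$ is preserved and the matching number stays $\le\frac{n-k}{2}$, whereas Lemma~\ref{larger} shows $\rho_\alpha$ does not decrease (and strictly increases unless the edges were already present). Filling in each component of $G-S$ to a clique likewise leaves the component structure, hence $o(G-S)$, unchanged and only raises $\rho_\alpha$. After these reductions the maximizer has the shape $K_s\vee(K_{n_1}\cup\cdots\cup K_{n_c})$ with $\sum_i n_i=n-s$ and at least $s+k$ of the $n_i$ odd.

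It then remains to show that the optimal multiset $\{n_i\}$ consists of one odd clique together with isolated vertices. Since $K_a\cup K_b$ is a spanning subgraph of $K_{a+b}$, merging two cliques is favourable by Lemma~\ref{larger} whenever it does not pull $o(G-S)$ below $s+k$; this absorbs every even clique into the large clique immediately. The only obstruction is a pair of odd cliques of size $\ge 3$, which I would instead replace by $K_{a+b-1}\cup K_1$, preserving the odd-component count while adding edges. This last move is not a subgraph relation, so I would prove $\rho_\alpha\!\big(K_s\vee(K_{a+b-1}\cup K_1\cup R)\big)>\rho_\alpha\!\big(K_s\vee(K_a\cup K_b\cup R)\big)$ by taking the Perron eigenvector of the right-hand graph, using that vertices in a common clique share a coordinate, and shifting weight from the smaller clique to the larger one, with Lemma~\ref{symm} (or an equitable-quotient comparison via Lemma~\ref{equit}) bounding the new radius from below. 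Iterating these moves collapses $G-S$ to $K_p\cup\overline{K_q}$ with $p+q=n-s$; maximizing $\binom{p}{2}$ subject to $q+[\,p\text{ odd}\,]\ge s+k$ forces $q=s+k-1$ and $p=n+1-2s-k$, the latter being odd precisely because $n\equiv k\pmod 2$. The resulting graph $K_s\vee(K_{n+1-2s-k}\cup\overline{K_{s+k-1}})$ has connectivity $s\ge t$ and matching number exactly $\frac{n-k}{2}$, so it is admissible, and the strictness in the moves above yields the isomorphism in the equality case.

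I expect the main obstacle to be the parity-preserving splitting step $K_a\cup K_b\mapsto K_{a+b-1}\cup K_1$ for two odd cliques of size at least three: unlike the pure merges it is invisible to subgraph monotonicity, so its strict gain in $\rho_\alpha$ must be established by a quantitative eigenvector estimate. A secondary difficulty is organizing the whole reduction so that every step keeps $o(G-S)\ge s+k$ intact and the count of odd versus even components stays consistent with $n\equiv k\pmod 2$, since it is this parity bookkeeping that ultimately pins the clique size to the odd value $n+1-2s-k$.
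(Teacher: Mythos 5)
Your proposal is correct and follows essentially the same path as the paper's proof: a maximal Berge--Tutte set $S$ with $t\le s=|S|\le\frac{n-k}{2}$, completion to $K_s\vee(K_{n_1}\cup\cdots\cup K_{n_q})$ via Lemma~\ref{larger}, parity bookkeeping from $n\equiv k\pmod{2}$, and a single non-subgraph spectral comparison that concentrates the odd components into one large clique plus singletons. Your one-shot move $K_a\cup K_b\mapsto K_{a+b-1}\cup K_1$ is precisely the composition of the repeated two-vertex transfers $(n_1',n_j')\mapsto(n_1'+2,n_j'-2)$ that the paper uses in Claim 3 (imported from Zhao et al.), so the quantitative eigenvector estimate you flag as the main obstacle is exactly the inequality the paper cites rather than reproves.
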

\begin{proof}
	Assume that $\alpha\in[0,1)$ and let $G$ be a $t$-connected graph with the maximum $A_{\alpha}$-spectral radius and matching number $\mu(G)\leq\frac{n-k}{2}$.
	From Theorem \ref{betu}, we see that $\max_{S\subseteq V(G)}(o(G-S)-|S|)=n-2\mu(G)$. Suppose that $S$ is the subset of $V(G)$ with the maximum number of vertices, such that $o(G-S)-|S|=n-2\mu(G)$. It can be checked that all components of $G-S$ are odd components (otherwise, we can randomly remove one vertex from each even component of $G-S$ and add them to the set $S$ until all components of $G-S$ are odd components, and the equality $o(G-S)-|S|=n-2\mu(G)$ always holds in the process). 
	
	For convenience, we denote $s=|S|$, $q=o(G-S)$. Note that $o(G-S)-|S|=n-2\mu(G)\geq k$, implying $q\geq s+k$. Let $G_1, G_2, \dots, G_q$ be the odd components of $G-S$ with $n_1, n_2, \dots, n_q$ vertices, respectively. Without loss of generality, we assume that $n_1\geq n_2\geq\dots\geq n_q\geq1$. Then we present the following three claims.

	\textbf{Claim 1.} Let $G'\cong K_s\vee\left( K_{n_1}\cup K_{n_2}\cup\dots\cup K_{n_q}\right)$. Then $\mu(G')\leq\frac{n-k}{2}$ and $\rho_{\alpha}\left(G \right)\leq \rho_{\alpha}\left( G'\right) $ with equality holding if and only if $G\cong G'$.
	\begin{proof}[Proof of Claim 1]
		Since $o(G'-S)=o(G-S)\geq s+k$ and $n-2\mu(G')\geq o(G'-S)-|S|$, we get $\mu(G')\leq\frac{n-k}{2}$. By Lemma \ref{larger}, it's easy to check that $\rho_{\alpha}(G)\leq\rho_{\alpha}(G')$ and equality holds if and only if $G\cong G'$. This proves Claim 1.
	\end{proof}

	\textbf{Claim 2.} Let $G''\cong K_s\vee\left( K_{n'_1}\cup K_{n'_2}\cup\dots\cup K_{n'_{s+k}}\right) $, where $n'_1=n_1+\sum_{i=s+k+1}^{q}n_{i}$, and $n'_i = n_i$ for $i=2,\dots,s+k$. Then $\mu(G'')\leq\frac{n-k}{2}$, and $\rho_{\alpha}\left(G' \right)\leq \rho_{\alpha}\left( G''\right)$ with equality holding if and only if $G'\cong G''$.
	\begin{proof}[Proof of Claim 2]
		Since $n_i$ ($i=1,2,\dots,n_q$) are all odd and $s+\sum_{i=1}^{q}n_i=n$, we see that $q+s\equiv n\equiv k$ (mod $2$), $q-s-k$ is even, and $n'_1=n_1+\sum_{i=s+k+1}^{q}n_{i}$ is odd. Note that $o(G''-S)=o(G-S)-(q-s-k)= s+k$  and $n-2\mu(G'')\geq o(G''-S)-|S|$, we obtain $\mu(G'')\leq\frac{n-k}{2}$. By Lemma \ref{larger}, we have $\rho_{\alpha}\left(G' \right)\leq \rho_{\alpha}\left( G''\right) $ with equality holding if and only if $G'\cong G''$. This proves Claim 2.
	\end{proof}

	
	\textbf{Claim 3.} Let $G'''\cong K_{s}\vee\left( K_{n+1-2s-k}\cup\overline{K_{s+k-1}}\right) $. Then $\mu(G''')\leq\frac{n-k}{2}$ and $\rho_{\alpha}\left(G'' \right)\leq \rho_{\alpha}\left( G'''\right)$ with equality holding if and only if $G''\cong G'''$.
	\begin{proof}[Proof of Claim 3]
		Note that $o(G'''-S)=o(G''-S)=s+k$ and $n-2\mu(G''')\geq o(G'''-S)-|S|$. It's easy to see that $\mu(G''')\leq\frac{n-k}{2}$. Now, we prove  $\rho_{\alpha}\left(G'' \right)\leq \rho_{\alpha}\left( G'''\right)$ for $[0,1)$. It suffices to show that $\rho_{\alpha}\left( K_s\vee\left( K_{n'_1}\cup K_{n'_2}\cup\dots\cup K_{n'_{s+k}}\right)  \right)<\rho_{\alpha}( K_s\vee\left( K_{n'_1+2}\cup\dots\cup K_{n'_j-2}\cup\dots\cup K_{n'_{s+k}}\right)$ whenever $n_j\geq3$ for $j=2,\dots,s+k$. Without loss of generality, we take $j=s+k$. By the proof of Claim 1 of Theorem 3 in \cite{Zhao}, we see that $\rho_{\alpha}\left( K_s\vee\left( K_{n'_1}\cup K_{n'_2}\cup\dots\cup K_{n'_{s+k}}\right)  \right)<\rho_{\alpha}\left( K_s\vee\left( K_{n'_1+2}\cup K_{n'_2}\cup\dots\cup K_{n'_{s+k-2}}\right)  \right)$. This inequality is strict. Hence, $\rho_{\alpha}\left(G'' \right)\leq \rho_{\alpha}\left( G'''\right)$ with equality holding if and only if $G''\cong G'''$. This proves Claim 3.
	\end{proof}
	Given the above, we conclude that for a $t$-connected graph $G$ on $n$ vertices with matching number $\mu(G)\leq\frac{n-k}{2}$, it follows that $\rho_{\alpha}(G)\leq\rho_{\alpha}\left( K_{s}\vee(K_{n+1-2s-k}\cup\overline{K_{s+k-1}})\right) $ for $\alpha\in[0,1)$, with equality holding if and only if $G\cong K_{s}\vee(K_{n+1-2s-k}\cup\overline{K_{s+k-1}})$. This completes the proof.
\end{proof}

	\begin{lemma}
		Let $n$, $t$, $k$ be three positive integers, where $2\leq k\leq n-2$, $1\leq t\leq\frac{n-k-4}{2}$, and $n\equiv k$ (mod $2$). For $ t+1\leq s\leq \frac{n-k-2}{2} $ and $\alpha\in[0,\frac{1}{2}]$, it follows that $\rho_{\alpha}(K_{s}\vee(K_{n+1-2s-k}\cup\overline{K_{s+k-1}}))<\max\left\lbrace \rho_{\alpha}(K_{t}\vee(K_{n+1-2t-k}\cup\overline{K_{t+k-1}})),\rho_{\alpha}\left( K_{\frac{n-k}{2}}\vee \overline{K_{\frac{n+k}{2}}} \right) \right\rbrace $. 
		
		(Note that $K_{s}\vee(K_{n+1-2s-k}\cup\overline{K_{s+k-1}})\cong K_{\frac{n-k}{2}}\vee \overline{K_{\frac{n+k}{2}}} $ for $s=\frac{n-k}{2}$).
	\end{lemma}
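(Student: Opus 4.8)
The plan is to pass from each graph $G_s:=K_{s}\vee(K_{n+1-2s-k}\cup\overline{K_{s+k-1}})$ to a fixed $3\times 3$ matrix whose Perron root equals $\rho_\alpha(G_s)$, and then to prove that this root, viewed as a function of the real parameter $s$, is strictly convex on $[t,\frac{n-k}{2}]$. Once convexity is in hand, a strictly convex function on a closed interval attains its maximum only at the endpoints and lies strictly below the chord joining the endpoint values at every interior point. Since the admissible range $t+1\le s\le\frac{n-k-2}{2}$ consists exactly of the interior integer points of $[t,\frac{n-k}{2}]$, and the two endpoints correspond to $G_t=K_{t}\vee(K_{n+1-2t-k}\cup\overline{K_{t+k-1}})$ and $G_{(n-k)/2}=K_{\frac{n-k}{2}}\vee\overline{K_{\frac{n+k}{2}}}$, this delivers precisely the asserted strict inequality $\rho_\alpha(G_s)<\max\{\rho_\alpha(G_t),\rho_\alpha(G_{(n-k)/2})\}$.

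First I would set up the equitable partition of $V(G_s)$ into the three cells $V_1$ (the $s$ vertices of $K_s$), $V_2$ (the $a:=n+1-2s-k$ vertices of the clique), and $V_3$ (the $b:=s+k-1$ vertices of the independent set). Writing the constant cell degrees as $d_1=n-1$, $d_2=s+a-1=n-k-s$, $d_3=s$, Definition \ref{def1} and Lemma \ref{equit} give $\rho_\alpha(G_s)=\lambda(M_s)$, where
\[
M_s=\begin{pmatrix}
\alpha(n-1)+(1-\alpha)(s-1) & (1-\alpha)a & (1-\alpha)b\\
(1-\alpha)s & \alpha(n-k-s)+(1-\alpha)(n-k-2s) & 0\\
(1-\alpha)s & 0 & \alpha s
\end{pmatrix}.
\]
Abbreviating the diagonal entries by $m_{11},m_{22},m_{33}$, a direct expansion yields the characteristic polynomial
\[
f_s(x)=(x-m_{11})(x-m_{22})(x-m_{33})-(1-\alpha)^2 s\bigl[a(x-m_{33})+b(x-m_{22})\bigr],
\]
which I will treat as a polynomial in the two variables $x$ and $s$. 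The key preliminary computation is that, collecting powers of $s$, the coefficient of $s^{3}$ in $f_s(x)$ equals $(1-\alpha)(3\alpha-2)$, and this is strictly negative for $\alpha\in[0,\frac12]$. This single sign fact is what ultimately forces the interior values of $s$ to be spectrally dominated by the two extreme configurations.

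The heart of the argument, and the step I expect to be the main obstacle, is upgrading this sign information into strict convexity of $g(s):=\lambda(M_s)$. Because $\lambda(M_s)$ is the Perron root we have $\partial_x f_s(x)>0$ at $x=g(s)$, so $g$ is smooth, and implicit differentiation of the identity $f_s(g(s))\equiv 0$ gives $g'=-f_s/f_x$ together with
\[
g''=-\frac{1}{f_x}\Bigl(f_{ss}+2f_{sx}g'+f_{xx}(g')^{2}\Bigr),
\]
so that $g''>0$ is equivalent to the bordered-Hessian inequality $f_{ss}f_x^{2}-2f_{sx}f_sf_x+f_{xx}f_s^{2}<0$ evaluated at $x=g(s)$. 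Verifying this inequality throughout $[t,\frac{n-k}{2}]$ is the genuinely technical part: it amounts to bounding the mixed partials of the cubic $f_s$ against one another, with the negativity of the $s^{3}$-coefficient as the driving term and the ranges $1\le t$, $2\le k\le n-2$ used to prevent the lower-order contributions from reversing the sign.

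A workable alternative that sidesteps the second-derivative bookkeeping is to fix $x_0:=\max\{\lambda(M_t),\lambda(M_{(n-k)/2})\}$ and analyze the single-variable cubic $h(s):=f_s(x_0)$ directly. Since $x_0$ lies to the right of the largest root of both $f_t$ and $f_{(n-k)/2}$, one has $h(t)\ge 0$ and $h(\tfrac{n-k}{2})\ge 0$; and because the leading coefficient $(1-\alpha)(3\alpha-2)$ is negative, $h'$ is a downward parabola, so $h$ can have at most one interior local minimum on $[t,\frac{n-k}{2}]$. It then suffices to check that this local minimum value stays positive, which gives $f_s(x_0)>0$, i.e. $\lambda(M_s)<x_0$, for every interior integer $s$. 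Either route reduces the statement to a finite, if delicate, polynomial sign verification, after which the endpoint-domination conclusion — and hence the lemma — follows at once.
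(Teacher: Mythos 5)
Your setup is sound and matches the paper's starting point: the equitable quotient matrix $M_s$, its characteristic polynomial (which is exactly the paper's $f_1(x,n,s,k,\alpha)$), and the observation that the coefficient of $s^3$ is $(1-\alpha)(3\alpha-2)<0$ for $\alpha\in[0,\frac{1}{2}]$ are all correct. But both of your routes stop precisely where the actual proof begins. In the convexity route, strict convexity of $g(s)=\lambda(M_s)$ is only conjectured: the bordered-Hessian inequality $f_{ss}f_x^2-2f_{sx}f_sf_x+f_{xx}f_s^2<0$ along $x=g(s)$ is a high-degree polynomial condition in $(n,s,k,\alpha)$ that you never verify, and you give no evidence it holds uniformly on $[t,\frac{n-k}{2}]$. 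The paper's proof shows how much genuine case structure the sign control requires: it splits into $n\leq 3k-4$ versus $n\geq 3k-2$; in the regime $s\leq\frac{n+3-3k}{4}$ it proves only monotone \emph{decrease} of $\rho_\alpha(s)$ (via the implicit function theorem, after verifying $\frac{\partial f_1}{\partial x}>0$ and $\frac{\partial f_1}{\partial s}>0$ on a suitable domain), comparing against the endpoint $t$; in the regime $s\geq\frac{n+4-3k}{4}$ it compares directly against $\rho_\alpha\bigl(K_{\frac{n-k}{2}}\vee\overline{K_{\frac{n+k}{2}}}\bigr)$, with further sub-cases $k\geq4$, $k=3$, $k=2$. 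The single sign fact about the $s^3$-coefficient, while it does appear in the paper's estimates (as $\varphi_1(\alpha)=4(3\alpha-2)(\alpha-1)>0$), is nowhere near sufficient to drive them, so your plan defers essentially all of the proof's content.

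The fallback route contains, in addition, a concrete logical flaw. For the monic cubic $f_s(\cdot)$ in $x$, the single-point inequality $f_s(x_0)>0$ does \emph{not} imply $\lambda(M_s)<x_0$: the point $x_0$ could lie between the smallest and the second-largest real roots, where a monic cubic is also positive. You would additionally have to show that $x_0$ exceeds the second-largest root of $f_s$, and this is exactly the issue the paper takes pains over. In the lemma itself the paper avoids it by proving the positivity of $f_1(x,n,s,k,\alpha)-\bigl(x-\frac{n-k-2(1-\alpha)s}{2}\bigr)f_2(x,n,k,\alpha)$ for \emph{every} $x\geq\rho_\alpha\bigl(K_{\frac{n-k}{2}}\vee\overline{K_{\frac{n+k}{2}}}\bigr)$ — exploiting that the auxiliary function $h$ is linear in $x$ with positive slope and that $f_2\geq0$ on that ray — so that $f_1>0$ on the whole ray and no root can lie there; in Theorem \ref{them1} the same point is handled explicitly through Cauchy's interlace theorem and the condition $\Delta_2(n,t,k,\alpha)\geq0$. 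Finally, your concluding step, ``it suffices to check that this local minimum value stays positive,'' is a restatement of the lemma rather than a reduction of it: certifying that minimum positive for all admissible $(n,t,k,\alpha)$ is as hard as the original claim, and you provide no mechanism for doing so. In short: correct framework and correct algebra at the outset, but the core verification is absent and the single-point evaluation step would fail as stated.
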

\begin{proof}
	Assume that $\alpha\in[0,\frac{1}{2}]$ and $t\leq s\leq \frac{n-k}{2}$. For convenience, we denote $\rho_{\alpha}(s)=\rho_{\alpha}( K_{s}\vee(K_{n+1-2s-k}\cup\overline{K_{s+k-1}})) $. Then $\rho_{\alpha}(t)=\rho_{\alpha}\left( K_{t}\vee(K_{n+1-2t-k}\cup\overline{K_{t+k-1}})\right) $ and $\rho_{\alpha}(\frac{n-k}{2})=\rho_{\alpha}\left(K_{\frac{n-k}{2}}\vee \overline{K_{\frac{n+k}{2}}} \right) $. The equitable quotient matrix of $A_{\alpha}\left( K_{s}\vee(K_{n+1-2s-k}\cup\overline{K_{s+k-1}})\right) $ with partition $\left\{ V(K_{s}), V(K_{n+1-2s-k}), \right.\\ \left. V(\overline{K_{s+k-1}}) \right\} $ has the form
	\begin{equation*}
		\begin{bmatrix}
			\alpha(n-1)+(1-\alpha)(s-1) & (1-\alpha)(n-2s-k+1)             & (1-\alpha)(s+k-1) \\
			(1-\alpha)s                 & \alpha(n-s-k)+(1-\alpha)(n-2s-k) & 0                 \\
			(1-\alpha)s                 & 0                                & \alpha s
		\end{bmatrix}.
	\end{equation*}

	By a simple calculation, we obtain the characteristic polynomial is
	\begin{align*}
		f_{1}(x,n,s,k,\alpha)&=x^3+[k-n+s+1-\alpha(n+s)]x^2+[ns\alpha^2+(n^2-kn-2s)\alpha+k-n+2s-ks-s^2]x\\
		&+(-k^2s+2kns-4ks^2+ks-n^2s+2ns^2-ns-3s^3+3s^2)\alpha^2+(3ns-3ks+7ks^2\\
		&+2k^2s-2ns^2-6s^2+5s^3-2kns)\alpha+ks-ns-3ks^2-k^2s+ns^2+2s^2-2s^3+kns.
	\end{align*}

	Referring to Lemma \ref{equit}, we see that $f_{1}(\rho_{\alpha}(s),n,s,k,\alpha)=0$. Note that $K_{n-s-k+1}$ is a subgraph of $K_{s}\vee(K_{n+1-2s-k}\cup\overline{K_{s+k-1}})$, then by Lemma \ref{larger}, we have $\rho_{\alpha}(s)>\rho_{\alpha}(K_{n-s-k+1})=n-s-k$. 
	
	The equitable quotient matrix of $A_{\alpha}\left( K_{\frac{n-k}{2}}\vee \overline{K_{\frac{n+k}{2}}} \right) $ with partition $\left\lbrace V\left(K_{\frac{n-k}{2}}, V(K_\frac{n+k}{2}) \right)  \right\rbrace $ equals
	\begin{equation*}
		\begin{bmatrix}
			\alpha(n-1)+(1-\alpha)\left( \frac{n+k}{2}-1 \right) & (1-\alpha)\frac{n+k}{2} \\
			(1-\alpha)\frac{n-k}{2} & \alpha\frac{n-k}{2}
		\end{bmatrix},
	\end{equation*}
and the corresponding characteristic polynomial is
\begin{equation*}
	f_2(x,n,k,\alpha)=x^2-\left( \frac{n-k}{2}-1+\alpha n \right)x+\left( -\frac{k^2}{4}-\frac{kn}{2}+\frac{k}{2}+\frac{3n^2}{4}-\frac{n}{2}\right)\alpha+\frac{k^2-n^2}{4}.
\end{equation*}

From Lemma \ref{equit}, we have $f_2\left(\rho_{\alpha}\left( \frac{n-k}{2}\right),n,k,\alpha \right)=0 $ and 
\begin{align*}
	\rho_{\alpha}\left(\frac{n-k}{2}\right)&=\frac{n-k-2+2\alpha n}{4}+\frac{\sqrt{(n-k-2+2\alpha n)^2+4(n^2-k^2)-4\alpha(3n+k-2)(n-k)}}{4}\\
	&>\frac{n-k-2}{2}.
\end{align*}

By a calculation, we obtain
\begin{equation*}
	\begin{split}
		&f_{1}(x,n,s,k,\alpha)-\left(x-\frac{n-k-2(1-\alpha)s}{2} \right)f_2(x,n,k,\alpha)\\
		&=\frac{n-2s-k}{8}\cdot([4k+4s-4-2\alpha(n+k-2)]x- (12\alpha-8)(1-\alpha)s^2\\
		&-[(2n-10k+12)\alpha^2+(18k+2n-24)\alpha-8k+8]s\\
		&-(n-k)[(1-\alpha)(n+k)-2\alpha(n-1)]).
	\end{split}
\end{equation*}

Let 
\begin{equation*}
	\begin{split}
		h(x,n,s,k,\alpha)&=[4k+4s-4-2\alpha(n+k-2)]x- (12\alpha-8)(1-\alpha)s^2-[(2n-10k+12)\alpha^2\\
		&+(18k+2n-24)\alpha-8k+8]s-(n-k)[(1-\alpha)(n+k)-2\alpha(n-1)],
	\end{split}
\end{equation*}
then we consider the following two situations.

\textbf{Case 1.} $n\leq 3k-3$. Since $n\equiv k$ (mod $2$), we consider $n\leq 3k-4$. Observe the following two inequalities
\begin{align}
	&4k+4s-4-2\alpha(n+k-2)>0\label{inq1}\\
	&\Leftrightarrow n<\left(\frac{2}{\alpha}-1\right)k+\frac{2}{\alpha}s-2\left(\frac{1}{\alpha}-1\right).\nonumber
\end{align}
and
\begin{align}
		&\rho_{\alpha}\left( \frac{n-k}{2}\right)\geq n-k\label{inq2}\\
		&\Leftrightarrow\frac{n-k-2+2\alpha n}{4}+\frac{\sqrt{(n-k-2+2\alpha n)^2+4(n^2-k^2)-4\alpha(3n+k-2)(n-k)}}{4}\geq n-k\nonumber\\
		&\Leftrightarrow n\leq 3k-4+\alpha(n-k+2)\nonumber\\
		&\Leftrightarrow n\leq \frac{1}{1-\alpha}(3k-4)-\frac{\alpha}{1-\alpha}(k-2).\nonumber
\end{align}

Obviously, for $\alpha\in[0,\frac{1}{2}]$ and $n\leq 3k-4$, the inequalities \eqref{inq1} and \eqref{inq2} hold. If $x\geq \rho_{\alpha}\left( \frac{n-k}{2}\right)$, we have
\begin{align*}
	h(x,n,s,k,\alpha)&\geq h\left(\rho_{\alpha}\left( \frac{n-k}{2}\right),n,s,k,\alpha \right)\geq h(n-k,n,s,k,\alpha)\\
	&=[4k+4s-4-2\alpha(n+k-2)](n-k)- (12\alpha-8)(1-\alpha)s^2-[(2n-10k+12)\alpha^2\\
	&+(18k+2n-24)\alpha-8k+8]s-(n-k)[(1-\alpha)(n+k)-2\alpha(n-1)]\\
	&=4(3\alpha-2)(\alpha-1)s^2+[-2(n-5k+6)\alpha^2-2(n+9k-12)\alpha+4n+4k-8]s\\
	&+(n-k)[3k-4-n+\alpha(n-k+2)].
\end{align*}

Denote $\varphi_1(\alpha)=4(3\alpha-2)(\alpha-1)$ and $\varphi_2(\alpha)=-2(n-5k+6)\alpha^2-2(n+9k-12)\alpha+4n+4k-8$. For $\alpha\in[0,\frac{1}{2}]$ and $n\leq 3k-4$, we have $\varphi_1(\alpha)>0$ and $-2(n-5k+6)>0$. Notice that
\begin{equation*}
	\frac{n+9k-12}{-2(n-5k+6)}>\frac{1}{2}\Leftrightarrow n>3-2k,
\end{equation*}
one can see $\varphi_2(\alpha)\geq\varphi_2(\frac{1}{2})=\frac{5}{2}(n-k)+1>0$. Thus, for $\alpha\in[0,\frac{1}{2}]$, $n\leq 3k-4$, $t\leq s<\frac{n-k}{2}$, and
$x\geq\rho_{\alpha}\left(\frac{n-k}{2}\right)$, we get
\begin{equation*}
	\begin{split}
		&~~~~f_{1}(x,n,s,k,\alpha)-\left(x-\frac{n-k-2(1-\alpha)s}{2} \right)f_2(x,n,k,\alpha)\\
		&=\frac{n-2s-k}{8}h(x,n,s,k,\alpha)\\
		&\geq \frac{n-2s-k}{8}\left\{\varphi_1(\alpha)s^2+\varphi_2(\alpha)s+(n-k)[3k-4-n+\alpha(n-k+2)]\right\}\\
		&>0,
	\end{split}
\end{equation*}
which implies that $\rho_{\alpha}(s)<\rho_{\alpha}(\frac{n-k}{2})$.

\textbf{Case 2.} $n\geq 3k-2$. We distinguish two cases depending on $s$.

\textbf{Case 2.1.} $s\leq \frac{n+3-3k}{4}$ (or $n\geq 4s+3k-3$). Consider the polynomial
\begin{align*}
	f_{1}(x,n,s,k,\alpha)&=x^3+[k-n+s+1-\alpha(n+s)]x^2+[ns\alpha^2+(n^2-kn-2s)\alpha+k-n+2s-ks-s^2]x\\
	&+(-k^2s+2kns-4ks^2+ks-n^2s+2ns^2-ns-3s^3+3s^2)\alpha^2+(3ns-3ks+7ks^2\\
	&+2k^2s-2ns^2-6s^2+5s^3-2kns)\alpha+ks-ns-3ks^2-k^2s+ns^2+2s^2-2s^3+kns,
\end{align*}
in the domain $D=\left\lbrace (x,n,s,k,\alpha)|~x>n-s-k, 1\leq t\leq s\leq\frac{n+3-3k}{4}, 2\leq k\leq n-2, \alpha\in[0,\frac{1}{2}] \right\rbrace $. We calculate that
\begin{equation*}
	\begin{split}
		\frac{\partial f_1}{\partial x}(x,n,s,k,\alpha)&=3x^2+2[k-n+s+1-(n+s)\alpha]x-s^2+(n\alpha^2-2\alpha-k+2)s\\
		&+k-n+\alpha n^2-\alpha kn.
	\end{split}
\end{equation*}

Observe the following inequality
\begin{align}
	&-\frac{k-n+s+1-(n+s)\alpha}{3}< n-s-k\nonumber\\
	&\Leftrightarrow (2-\alpha)n-(2+\alpha)s-2k+1>0\nonumber\\
	&\Leftrightarrow n>\frac{(2+\alpha)}{2-\alpha}s+\frac{2}{2-\alpha}k-\frac{1}{2-\alpha}\label{inq3}.
\end{align}

Since $\alpha\in[0,\frac{1}{2}]$, $s\geq t\geq 1$, $k\geq2$, and $n\geq 4s+3k-3$, then the inequality \eqref{inq3} holds. Therefore, we get
\begin{align*}
	\frac{\partial f_1}{\partial x}(x,n,s,k,\alpha)&>\frac{\partial f_1}{\partial x}(n-s-k,n,s,k,\alpha)\\
	&=ns\alpha^2+[(k-n)n+2s(s+k-1)]\alpha\\
	&+[(n-s-k)-(s+k-1)]n+k(s+k-1).
\end{align*}
It is routine to show
\begin{equation}
	-\frac{(k-n)n+2s(s+k-1)}{2ns}>1\Leftrightarrow n(n-k-2s)>2s(s+k-1).\label{inq4}
\end{equation}

Since $s\geq t\geq 1$, $k\geq 2$, and $n\geq 4s+3k-3$, we see the inequality \eqref{inq4} holds. Then
\begin{equation*}
	\begin{split}
		\frac{\partial f_1}{\partial x}(n-s-k,n,s,k,\alpha)&>\frac{\partial f_1}{\partial x}\left( n-s-k,n,s,k,\frac{1}{2}\right) \\
		&=\frac{1}{4}[2n^2+(4-7s-6k)n+4k^2+8ks-4k+4s^2-4s].
	\end{split}
\end{equation*}

Notice that
\begin{align}
	&-\frac{4-7s-6k}{4}< 4s+3k-3\Leftrightarrow 9s+6k> 8 \label{inq5}.
\end{align}

For $s\geq t\geq 1$, and $k\geq2$, the inequality \eqref{inq5} holds. It follows that
\begin{equation*}
	\begin{split}
		\frac{\partial f_1}{\partial x}(n-s-k,n,s,k,\frac{1}{2})&>\frac{\partial f_1}{\partial x}\left( 3s+2k-3,4s+3k-3,s,k,\frac{1}{2}\right) \\
		&=\frac{1}{4}(4k^2+11ks-10k+8s^2-15s+6)\\
		&=\frac{1}{4}[4(k-2)k+2k(s-1)+9s(k-1)+(8s-6)s+6]\\
		&>0
	\end{split}
\end{equation*}
for $s\geq t\geq 1$ and $k\geq 2$. By the above discussion, we have $\frac{\partial f_1}{\partial x}>0$ in the domain $D$. Moreover, we calculate that
\begin{equation*}
	\begin{split}
		\frac{\partial f_1}{\partial s}(x,n,s,k,\alpha)&=(1-\alpha)x^2+(n\alpha^2-2\alpha-k-2s+2)x-3(3\alpha-2)(\alpha-1)s^2\\
		&+[(4n-8k+6)\alpha^2+(14k-4n-12)\alpha-6k+2n+4]s\\
		&+[(k-n)-(n-k)^2]\alpha^2+[3(n-k)+(n-k)^2+(k-n)(n+k)]\alpha\\
		&+(k-n)+k(n-k).\label{inq6}
	\end{split}
\end{equation*}

It is routine to show
\begin{align}
		&-\frac{n\alpha^2-2\alpha-k-2s+2}{2(1-\alpha)}\leq n-s-k\nonumber\\
		&\Leftrightarrow [1+(1-\alpha)^2]n\geq (4-2\alpha)s+(3-2\alpha)k+2(1+\alpha)\nonumber\\
		&\Leftrightarrow n\geq \frac{4-2\alpha}{1+(1-\alpha)^2}s+\frac{3-2\alpha}{1+(1-\alpha)^2}k+\frac{2(1+\alpha)}{1+(1-\alpha)^2}.\label{inq7}
\end{align}

Note that $\alpha\in[0,\frac{1}{2}]$, $s\geq t\geq 1$, $k\geq2$, and $n\geq 4s+3k-3$. We have the inequality \eqref{inq7} holds. Hence,
\begin{align*}
	\frac{\partial f_1}{\partial s}(x,n,s,k,\alpha)&>\frac{\partial f_1}{\partial s}(n-s-k,n,s,k,\alpha)\\
	&=[(n-k-7s)(s+k-1)+2s(n-s-\frac{1}{2})]\alpha^2\\
	&+[(1-k-n)(n-k)+2s(2s+k-n)+10s(s+k-1)]\alpha\\
	&+(n-k)(n-k+1)-s(2n+3s+k-2).
\end{align*}

Notice that $(n-k-7s)(s+k-1)+2s(n-s-\frac{1}{2})\geq 2k(s+k-\frac{5}{2})+3s(s+k-\frac{7}{3})+3>0$ for $s\geq t\geq 1$ and $k\geq2$. Then we consider the following inequality
\begin{align}
	&-\frac{(1-k-n)(n-k)+2s(2s+k-n)+10s(s+k-1)}{2[(n-k-7s)(s+k-1)+2s(n-s-\frac{1}{2})]}>1\nonumber\\
	&\Leftrightarrow (4s+k-n)(n-s-k)-s(n-k-2)-(n-k)<0.\label{inq8}
\end{align}

It's easy to check that the inequality \eqref{inq8} holds for $s\geq t\geq1$, $k\leq n-2$, and $n\geq 4s+3k-3$.
Hence,
\begin{equation*}
	\begin{split}
		\frac{\partial f_1}{\partial s}\left( n-s-k,n,s,k,\alpha\right) &>\frac{\partial f_1}{\partial s}\left( n-s-k,n,s,k,\frac{1}{2}\right)\\
		&=\frac{1}{4} [2n^2+(5-9s-7k)n+5k^2+12ks-5k+7s^2-6s]
	\end{split}.
\end{equation*}

Since
\begin{equation*}
	-\frac{5-9s-7k}{4}< 4s+3k-3\Leftrightarrow 5s+5k>7
\end{equation*}
holds for $s\geq t\geq1$, and $k\geq 2$. It follows that
\begin{equation*}
	\begin{split}
		\frac{\partial f_1}{\partial s}(n-s-k,n,s,k,\frac{1}{2})&>\frac{\partial f_1}{\partial s}\left( 3s+2k-3,4s+3k-3,s,k,\frac{1}{2}\right) \\
		&=\frac{1}{4}(2k^2+5ks-5k+3s^2-7s+3)\\
		&=\frac{1}{4}[2k(k-2)+k(s-1)+4s(k-2)+3s^2+s+3]\\
		&>0.
	\end{split}
\end{equation*}
By the above discussion, one can see $\frac{\partial f_1}{\partial s}>0$ in the domain $D$. Referring to implicit function theorem, we get 
\begin{equation*}
	\frac{\partial x(s)}{\partial s}=-\dfrac{\frac{\partial f_1(x,n,s,k,\alpha)}{\partial s}}{\frac{\partial f_1(x,n,s,k,\alpha)}{\partial x}}<0
\end{equation*}
in the domain $D$, which implies that $x=x(s)$ is a strictly decreasing function for $t\leq s\leq \frac{n+3-3k}{4}$. As a result, for any $t+1\leq s\leq \frac{n+3-3k}{4}$, we deduce that $\rho_{\alpha}(s)<\rho_{\alpha}(t)$.

\textbf{Case 2.2.} $s\geq\frac{n+4-3k}{4}$ (or $n\leq 4s+3k-4$). Recall the function
\begin{equation*}
	\begin{split}
		&f_{1}(x,n,s,k,\alpha)-\left(x-\frac{n-k-2(1-\alpha)s}{2} \right)f_2(x,n,k,\alpha)\\
		&=\frac{n-2s-k}{8}\cdot([4k+4s-4-2\alpha(n+k-2)]x- (12\alpha-8)(1-\alpha)s^2\\
		&-[(2n-10k+12)\alpha^2+(18k+2n-24)\alpha-8k+8]s\\
		&-(n-k)[(1-\alpha)(n+k)-2\alpha(n-1)]).
	\end{split}
\end{equation*}

Since $n\leq 4s+3k-4\leq\left(\frac{2}{\alpha}-1\right)k+\frac{2}{\alpha}s-2\left(\frac{1}{\alpha}-1\right)$ for any $\alpha\in[0,\frac{1}{2}]$, then $4k+4s-4-2\alpha(n+k-2)>0$.
For $x\geq\rho_{\alpha}\left( \frac{n-k}{2}\right) $, we have
\begin{equation*}
	\begin{split}
		h(x,n,s,k,\alpha)&\geq h\left( \rho_{\alpha}\left( \frac{n-k}{2}\right),n,s,k,\alpha\right) \\
		&=\sqrt{(n-k-2+2\alpha n)^2+4(n^2-k^2)-4\alpha(3n+k-2)(n-k)}\\
		&\times \left[ -\frac{\alpha}{2}+\frac{1}{4}(4\alpha+4k+4s-2\alpha k-4)\right]+4(3\alpha-2)(\alpha-1)s^2\\
		&+\left[ (1-2\alpha^2)n+(10\alpha^2-18\alpha+7)k-12\alpha^2+24\alpha-10\right]s\\
		&+(2n-kn-n^2)\alpha^2+\left( -\frac{k^2}{2}+2k+\frac{5}{2}n^2-2n-2 \right)\alpha\\
		&+kn-n-k-n^2+2.
	\end{split}
\end{equation*}

For convenience, we denote 
\begin{equation*}
	\begin{split}
		g_1(n,s,k,\alpha)&=4(3\alpha-2)(\alpha-1)s^2+[ (1-2\alpha^2)n+(10\alpha^2-18\alpha+7)k\\
		&-12\alpha^2+24\alpha-10]s+(2n-kn-n^2)\alpha^2+\left( -\frac{k^2}{2}+2k+\frac{5}{2}n^2-2n-2 \right)\alpha\\
		&+kn-n-k-n^2+2,
	\end{split}
\end{equation*}
and
\begin{equation*}
	g_2(n,k,\alpha)=\sqrt{(n-k-2+2\alpha n)^2+4(n^2-k^2)-4\alpha(3n+k-2)(n-k)}.
\end{equation*}

Note that $1-2\alpha^2\geq \frac{1}{2}>0$, $n\geq 3k-2$, and $k\geq2$, we calculate that $(1-2\alpha^2)n+(10\alpha^2-18\alpha+7)k-12\alpha^2+24\alpha-10\geq (4k-8)\alpha^2+(24-18k)\alpha+10k-12>2k-2>0$. Thus, we get
\begin{equation*}
	\begin{split}
		h\left( \rho_{\alpha}\left( \frac{n-k}{2}\right),n,s,k,\alpha\right) &=g_1(n,s,k,\alpha)+\left[ -\frac{\alpha}{2}+\frac{1}{4}(4\alpha+4k+4s-2\alpha k-4)\right]g_2(n,k,\alpha)\\
		&\geq g_1\left( n,\frac{n+4-3k}{4},\alpha\right)+\left[ -\frac{\alpha}{2}+\frac{1}{4}(4\alpha+4k+4\cdot\frac{n+4-3k}{4}-2\alpha k-4)\right]\\
		&\times g_2(n,k,\alpha)\\
		&=\frac{1}{4}\left\{ [-3(n+k)^2+4(n+k)+8n]\alpha^2+[6(n+k)^2-(n-k)(n+k)\right.\\
		&\left.-16(n+k)-8(n-1)]\alpha+6(k+n)-2(n+k)^2+(n+k)(n-k) \right\}\\
		&+\left[ \frac{1}{4}(1-2\alpha)(n+k)+\alpha\right]g_2(n,k,\alpha)\\
		&=\frac{n+k}{4}\left\{ [-3(n+k)+4]\alpha^2+[6(n+k)-(n-k)-16]\alpha\right.\\
		&\left.+6-2(n+k)+(n-k)+(1-2\alpha)g_2(n,k,\alpha) \right\}\\
		&+\alpha\left\{ g_2(n,k,\alpha)-[2(n-1)-2n\alpha]\right\}.
	\end{split}
\end{equation*}

Let $\psi_1(n,k,\alpha)=[-3(n+k)^2+4(n+k)]\alpha^2+[6(n+k)-(n-k)-16]\alpha+6-2(n+k)+(n-k)+(1-2\alpha)g_2(n,k,\alpha)$ and $\psi_2(n,k,\alpha)=g_2(n,k,\alpha)-[2(n-1)-2n\alpha]$. Then we shall show that $\psi_1(n,k,\alpha)>0$ and $\psi_2(n,k,\alpha)>0$ for $\alpha\in[0,\frac{1}{2}]$, $k\geq2$, and $n\geq 3k-2$. 

For the function $\psi_2(n,k,\alpha)$, we calculate that
\begin{equation*}
	\begin{split}
		\psi_2(n,k,\alpha)&=\sqrt{(n-k-2+2\alpha n)^2+4(n^2-k^2)-4\alpha(3n+k-2)(n-k)}-[2(n-1)-2n\alpha]\\
		&=\frac{(n+k)[n-3k+4+4\alpha(k-2)]}{\sqrt{(n-k-2+2\alpha n)^2+4(n^2-k^2)-4\alpha(3n+k-2)(n-k)}+[2(n-1)-2n\alpha]}\\
		&> 0
	\end{split}
\end{equation*}
for $\alpha\in[0,\frac{1}{2}]$, $k\geq2$, and $n\geq 3k-2$.

To prove that $\psi_1(n,k,\alpha)>0$, we distinguish three cases depending on $k$.

(i) $k\geq4$.
We let
\begin{equation*}
	\tau_1(n,k,\alpha)=[-3(n+k)+4]\alpha+\left[ \frac{9}{2}(n+k)-16\right] -2g_2(n,k,\alpha),
\end{equation*}
and
\begin{equation*}
	\tau_2(n,k,\alpha)=g_2(n,k,\alpha)-\left[ 2(n+k)-(n-k)-6-\frac{3}{2}(n+k)\alpha+(n-k)\alpha\right].
\end{equation*}
It's easy to see that $\psi_1(n,k,\alpha)=\alpha\tau_1(n,k,\alpha)+\tau_2(n,k,\alpha)$.
By a calculation, we have
\begin{equation*}
	\begin{split}
		\tau_1(n,k,\alpha)&=\frac{1}{[-3(n+k)+4]\alpha+\left[ \frac{9}{2}(n+k)-16\right] +2g_2(n,k,\alpha)}\left[ \left( -7\alpha^2+5\alpha+\frac{1}{4}\right)n^2\right.\\
		&\left.+\left(132\alpha+\frac{97}{2}k-70\alpha k+18\alpha^2k-24\alpha^2-128 \right)n+9\alpha^2k^2-24\alpha^2k+16\alpha^2\right.\\
		&\left.-43\alpha k^2+164\alpha k-128\alpha+\frac{129}{4}k^2-160k+240 \right].
	\end{split}
\end{equation*}

Denote $\tilde{\tau}_1(n,k,\alpha)=\left( -7\alpha^2+5\alpha+\frac{1}{4}\right)n^2
+\left(132\alpha+\frac{97}{2}k-70\alpha k+18\alpha^2k-24\alpha^2-128 \right)n+9\alpha^2k^2-24\alpha^2k+16\alpha^2-43\alpha k^2+164\alpha k-128\alpha+\frac{129}{4}k^2-160k+240$. 
Note that $-7\alpha^2+5\alpha+\frac{1}{4}\geq \frac{1}{4}>0$. We calculate that
\begin{align}
	&-\frac{132\alpha+\frac{97}{2}k-70\alpha\label{inq9} k+18\alpha^2k-24\alpha^2-128}{2(-7\alpha^2+5\alpha+\frac{1}{4})}< 3k-2\\
	&\Leftrightarrow (-24\alpha^2-40\alpha+50)k+4\alpha^2+112\alpha-129> 0. \nonumber
\end{align}

For $\alpha\in[0,\frac{1}{2}]$ and $k\geq4$, we see that $-24\alpha^2-40\alpha+50\geq24>0$, and $(-24\alpha^2-40\alpha+50)k+4\alpha^2+112\alpha-129\geq-92\alpha^2-48\alpha+71\geq24>0$. It's easy to check that the inequality \eqref{inq9} holds and
\begin{equation*}
	\begin{split}
		\tilde{\tau}_1(n,k,\alpha)&>\tilde{\tau}_1(3k-2,k,\alpha)\\
		&=(36-48k)\alpha^2+(-208k^2+640k-372)\alpha+180k^2-644k+497.
	\end{split}
\end{equation*}

Since $36-48k<0$ for $k\geq4$, we have $\tilde{\tau}_1(3k-2,k,0)=180k^2-644k+497\geq801>0$ and $\tilde{\tau}_1(3k-2,k,\frac{1}{2})=76k^2-336k+320\geq192>0$. Consequently, we see $\tau_1(n,k,\alpha)=\frac{1}{[-3(n+k)+4]\alpha+\left[ \frac{9}{2}(n+k)-16\right] +2g_2(n,k,\alpha)}\tilde{\tau}_1(n,k,\alpha)>0$. Moreover, we calculate that
\begin{equation*}
	\begin{split}
		\tau_2(n,k,\alpha)&=\frac{1}{4\left\{g_2(n,k,\alpha)+\left[ 2(n+k)-(n-k)-6-\frac{3}{2}(n+k)\alpha+(n-k)\alpha\right] \right\}}\\
		&\times\left\{ (15\alpha^2-28\alpha+16)n^2+(48\alpha k-32k-24\alpha-10\alpha^2k+32)n-25\alpha^2k^2+76\alpha k^2\right.\\
		&\left.-152\alpha k-48k^2+160k-128\right\}.
	\end{split}
\end{equation*}

Denote $\tilde{\tau}_2(n,k,\alpha)=(15\alpha^2-28\alpha+16)n^2+(48\alpha k-32k-24\alpha-10\alpha^2k+32)n-25\alpha^2k^2+76\alpha k^2-152\alpha k-48k^2+160k-128$. Note that $15\alpha^2-28\alpha+16\geq5.75>0$ for $\alpha\in[0,\frac{1}{2}]$. Observe the following inequality
\begin{align}
	&-\frac{48\alpha k-32k-24\alpha-10\alpha^2k+32}{2(15\alpha^2-28\alpha+16)}<3k-2\label{inq10}\\
	&\Leftrightarrow (80\alpha^2-120\alpha+64)k-60\alpha^2+88\alpha-32> 0.\nonumber
\end{align}

Since $80\alpha^2-120\alpha+64\geq24>0$ for $\alpha\in[0,\frac{1}{2}]$, we obtain $(80\alpha^2-120\alpha+64)k-60\alpha^2+88\alpha-32\geq260\alpha^2-392\alpha+224\geq93>0$. Then the inequality \eqref{inq10} holds. It follows that
\begin{equation*}
	\begin{split}
		\tilde{\tau}_2(n,k,\alpha)&>\tilde{\tau}_2(3k-2,k,\alpha)\\
		&=(80\alpha^2-32\alpha)k^2+(-160\alpha^2+16\alpha+128)k+60\alpha^2-64\alpha-128.
	\end{split}
\end{equation*}

Notice that $(80\alpha-32)\alpha\geq4>0$ and $-160\alpha^2+16\alpha+128\geq96>0$ for $\alpha\in[0,\frac{1}{2}]$. Accordingly, we get $\tilde{\tau}_2(3k-2,k,\alpha)>\tilde{\tau}_2(10,4,\alpha)=700\alpha^2-512\alpha+384\geq303>0$ and $\tau_2(n,k,\alpha)=\frac{1}{4\left\{g_2(n,k,\alpha)+\left[ 2(n+k)-(n-k)-6-\frac{3}{2}(n+k)\alpha+(n-k)\alpha\right] \right\}}\tilde{\tau}_2(n,k,\alpha)>0$. 

By the above discussion, we have
\begin{equation*}
	\psi_1(n,k,\alpha)=\alpha\tau_1(n,k,\alpha)+\tau_2(n,k,\alpha)>0
\end{equation*}
for $\alpha\in[0,\frac{1}{2}]$, $k\geq2$, $n\geq3k-2$.

(ii) $k=3$. In this case, we have $n\geq 3k-2=7$. Let
\begin{equation*}
		\tau_3(n,k,\alpha)=[-3(n+k)+4]\alpha+\left[ 5(n+k)-16\right] -2g_2(n,k,\alpha)
\end{equation*}
and
\begin{equation*}
	\tau_4(n,k,\alpha)=g_2(n,k,\alpha)-\left[ 2(n+k)-(n-k)-6-(n+k)\alpha+(n-k)\alpha\right].
\end{equation*}

Obviously, we see $\psi_1(n,k,\alpha)=\alpha\tau_3(n,k,\alpha)+\tau_4(n,k,\alpha)$. Analogously, we calculate that
\begin{equation*}
	\begin{split}
		\tau_3(n,3,\alpha)&=[(5n-1)-(3n+5)\alpha]-2\sqrt{(n+2\alpha n-5)^2+4n^2-4\alpha(3n+1)(n-1)-36}\\
		&=\frac{(-7\alpha^2+2\alpha+5)n^2+(30\alpha^2-92\alpha+30)n+25\alpha^2-38\alpha+45}{[(5n-1)-(3n+5)\alpha]+2\sqrt{(n+2\alpha n-5)^2+4n^2-4\alpha(3n+1)(n-1)-36}}.
	\end{split}
\end{equation*}

Denote $\tilde{\tau}_3(n,3,\alpha)=(-7\alpha^2+2\alpha+5)n^2+(30\alpha^2-92\alpha+30)n+25\alpha^2-38\alpha+45$. Since $-7\alpha^2+2\alpha+5\geq\frac{17}{4}>0$ for $\alpha\in[0,\frac{1}{2}]$, we see that the inequality
\begin{equation*}
	-\frac{30\alpha^2-92\alpha+30}{2(-7\alpha^2+2\alpha+5)}<7\Leftrightarrow -68\alpha^2-64\alpha+100>0
\end{equation*}
holds. It follows that $\tilde{\tau}_3(n,3,\alpha)>\tilde{\tau}_3(7,3,\alpha)=-108\alpha^2-584\alpha+500\geq 181>0$ for $\alpha\in[0,\frac{1}{2}]$. Hence, $\tau_3(n,3,\alpha)=\frac{\tilde{\tau}_3(n,3,\alpha)}{[(5n-1)-(3n+5)\alpha]+2\sqrt{(n+2\alpha n-5)^2+4n^2-4\alpha(3n+1)(n-1)-36}}>0$ for $\alpha\in[0,\frac{1}{2}]$, $k=3$, and $n\geq7$.

By a simple calculation, we obtain
\begin{equation*}
	\begin{split}
		\tau_4(n,3,\alpha)&=\sqrt{(n+2\alpha n-5)^2+4n^2-4\alpha(3n+1)(n-1)-36}-[n+\alpha(n-3)-\alpha(n+3)+3]\\
		&=\frac{(4n^2-36)\alpha^2+(-8n^2+24n+48)\alpha+4n^2-16n-20}{\sqrt{(n+2\alpha n-5)^2+4n^2-4\alpha(3n+1)(n-1)-36}+[n+3-6\alpha]}.
	\end{split}
\end{equation*}

Denote $\tilde{\tau}_4(n,3,\alpha)=(4n^2-36)\alpha^2+(-8n^2+24n+48)\alpha+4n^2-16n-20$. Since $4n^2-36\geq 160>0$ for $n\geq7$, we have
\begin{equation*}
	\frac{8n^2-24n-48}{8n^2-72}>\frac{1}{2}\Leftrightarrow n^2-6n-3>0
\end{equation*}
holds. Notice that $n^2-6n-3\geq4>0$ for $n\geq7$, we obtain $\tilde{\tau}_4(n,3,\alpha)>\tilde{\tau}_4(n,3,\frac{1}{2})=n^2-4n-5\geq 16>0$ for $n\geq7$. Thus, one can see $\tau_4(n,3,\alpha)=\frac{\tilde{\tau}_4(n,3,\alpha)}{\sqrt{(n+2\alpha n-5)^2+4n^2-4\alpha(3n+1)(n-1)-36}+[n+3-6\alpha]}>0$ for $\alpha\in[0,\frac{1}{2}]$, $k=3$, and $n\geq7$.

By the above discussion, we conclude that
\begin{equation*}
	\psi_1(n,3,\alpha)=\alpha\tau_3(n,3,\alpha)+\tau_4(n,3,\alpha)>0.
\end{equation*}

(iii) $k=2$. In this case, one can see $n\geq 3k-2=4$. Let
\begin{equation*}
	\tau_5(n,k,\alpha)=[-3(n+k)+4]\alpha+\left[ 6(n+k)-(n-k)-16\right] -2g_2(n,k,\alpha)
\end{equation*}
and
\begin{equation*}
	\tau_6(n,k,\alpha)=g_2(n,k,\alpha)-\left[ 2(n+k)-(n-k)-6\right].
\end{equation*}
It's obvious that $\psi_1(n,k,\alpha)=\alpha\tau_5(n,k,\alpha)+\tau_6(n,k,\alpha)$. Then we calculate that 
\begin{equation*}
	\begin{split}
		\tau_5(n,2,\alpha)&=[(5n-2)-(3n+2)\alpha]-2\sqrt{(n+2\alpha n-4)^2+4n^2-12\alpha(n-2)n-16}\\
		&=\frac{(-7n^2+12n+4)\alpha^2+(2n^2-40n+8)\alpha+5n^2+12n+4}{[(5n-2)-(3n+2)\alpha]+2\sqrt{(n+2\alpha n-4)^2+4n^2-12\alpha(n-2)n-16}}.
	\end{split}
\end{equation*}

Denote $\tilde{\tau}_5(n,2,\alpha)=(-7n^2+12n+4)\alpha^2+(2n^2-40n+8)\alpha+5n^2+12n+4$. 
Since $-7n^2+12n+4\leq-60<0$ for $n\geq4$, we check that $\tilde{\tau}_5(n,2,0)=5n^2+12n+4\geq 132>0$ for $n\geq4$ and $\tilde{\tau}_5(n,2,\frac{1}{2})=\frac{1}{4}(17n^2-20n+36)\geq 57>0$ for $n\geq4$. Hence, $\tau_5(n,2,\alpha)=\frac{\tilde{\tau}_5(n,2,\alpha)}{[(5n-2)-(3n+2)\alpha]+2\sqrt{(n+2\alpha n-4)^2+4n^2-12\alpha(n-2)n-16}}>0$ for $\alpha\in[0,\frac{1}{2}]$, $k=2$, and $n\geq4$. Then we consider the function
\begin{equation*}
	\begin{split}
		\tau_6(n,2,\alpha)&=\sqrt{(n+2\alpha n-4)^2+4n^2-12\alpha n(n-2)-16}-n\\
		&=\frac{4(1-\alpha)n[(1-\alpha)n-2]}{\sqrt{(n+2\alpha n-4)^2+4n^2-12\alpha n(n-2)-16}+n}.
	\end{split}
\end{equation*}

It's easy to see that $4(1-\alpha)n[(1-\alpha)n-2]\geq0$ for any $\alpha\in[0,\frac{1}{2}]$ and $n\geq4$. Therefore,  we have $\tau_6(n,2,\alpha)\geq0$. By the above discussion, we deduce that
\begin{equation*}
	\psi_1(n,2,\alpha)=\alpha\tau_5(n,2,\alpha)+\tau_6(n,2,\alpha)>0,
\end{equation*}
for $\alpha\in[0,\frac{1}{2}]$, $k=2$, and $n\geq4$.

As can be seen, $\psi_1(n,k,\alpha)>0$ for $\alpha\in[0,\frac{1}{2}]$, $k\geq2$, and $n\geq3k-2$. Consequently,
%
for any $\alpha\in[0,\frac{1}{2}]$, $\frac{n+4-3k}{4}\leq s\leq \frac{n-k-2}{2}$, $k\geq2$, and $x\geq\rho_{\alpha}\left( \frac{n-k}{2}\right) $, we have
\begin{equation*}
	\begin{split}
		&~~~~f_{1}(x,n,s,k,\alpha)-\left(x-\frac{n-k-2(1-\alpha)s}{2} \right)f_2(x,n,k,\alpha)\\
		&\geq\frac{n-2s-k}{8}h\left( \rho_{\alpha}\left( \frac{n-k}{2}\right),n,s,k,\alpha\right) \\
		&\geq\frac{n-2s-k}{8} \left[ \frac{n+k}{4}\psi_1(n,k,\alpha)+\alpha\psi_2(n,k,\alpha)\right]\\
		&>0,
	\end{split}
\end{equation*}
which implies that $\rho_{\alpha}\left( s\right)<\rho_{\alpha}\left( \frac{n-k}{2}\right)$. 

This completes the proof.
\end{proof}

\begin{definition}\label{def2}
	Let
	\begin{equation*}
		\begin{split}
			\Delta_1(n,t,k,\alpha)&=\left(k+t-1-\frac{\alpha}{2}(n+k)+\alpha \right)\sqrt{(n-k-2+2\alpha n)^2+4(n^2-k^2)-4\alpha(3n+k-2)(n-k)}\\
			&+\left[ 12t^2+(10k-2n-12)t+2n-kn-n^2\right]\alpha^2+\left[ (24-18k)t-20t^2-\frac{k^2}{2}+2k+\frac{5}{2}n^2\right.\\
			&\left.-2n-2\right]\alpha+k^2-n^2+(k+t-1)(n+8t-k-2).
		\end{split}
	\end{equation*}
	and
	\begin{equation*}
		\begin{split}
			\Delta_2(n,t,k,\alpha)&=(\alpha-1)n^2+[4t\alpha^2+(2-14t-2k)\alpha+4k+12t-4]n-4t^2\alpha^2+(k^2+6kt-2k\\
			&+16t^2-4t)\alpha-3k^2-12kt+4k-16t^2+8t.
		\end{split}
	\end{equation*}
	where $n$, $t$, $k$ are three positive integers and $\alpha\in[0,\frac{1}{2}]$ is a real number. 
\end{definition}

\begin{theorem}\label{them1} 
	Let $n$, $t$, $k$ be three positive integers, where $2\leq k\leq n-2$, $1\leq t\leq\frac{n-k}{2}$ and $n\equiv k$ (mod $2$), and let $\alpha\in[0,\frac{1}{2}]$ be an arbitrary real number. The functions $\Delta_1(n,t,k,\alpha)$ and $\Delta_2(n,t,k,\alpha)$ are defined as in Definition \ref{def2}. For a $t$-connected graph $G$ on $n$ vertices with matching number $\mu(G)\leq\frac{n-k}{2}$, we obtain the following three conclusions.
	\begin{itemize}
		\item[\textnormal{(\romannumeral1)}] 
		If $\Delta_1(n,t,k,\alpha)<0$, then 
		\begin{equation*}
			\rho_{\alpha}(G)\leq\rho_{\alpha}\left( K_t\vee\left( K_{n+1-2t-k}\cup\overline{K_{t+k-1}} \right) \right)
		\end{equation*} 
		with equality holding if and only if $G\cong K_t\vee\left( K_{n+1-2t-k}\cup\overline{K_{t+k-1}} \right)$.
		\item[\textnormal{(\romannumeral2)}] If $\Delta_1(n,t,k,\alpha)=0$ and $\Delta_2(n,t,k,\alpha)\geq0$, then 
		\begin{equation*}
			\rho_{\alpha}(G)\leq\rho_{\alpha}\left( K_t\vee\left( K_{n+1-2t-k}\cup\overline{K_{t+k-1}} \right) \right)=\rho_{\alpha}\left( K_{\frac{n-k}{2}}\vee \overline{K_{\frac{n+k}{2}}}  \right)
		\end{equation*}
		 with equality holding if and only if $G\cong K_t\vee\left( K_{n+1-2t-k}\cup\overline{K_{t+k-1}} \right)$ or $G\cong K_{\frac{n-k}{2}}\vee \overline{K_{\frac{n+k}{2}}}$.
		\item[\textnormal{(\romannumeral3)}] If $\Delta_1(n,t,k,\alpha)>0$ and $\Delta_2(n,t,k,\alpha)\geq0$, then 
		\begin{equation*}
			\rho_{\alpha}(G)\leq\rho_{\alpha}\left( K_{\frac{n-k}{2}}\vee \overline{K_{\frac{n+k}{2}}}  \right)
		\end{equation*}
		with equality holding if and only if $G\cong K_{\frac{n-k}{2}}\vee \overline{K_{\frac{n+k}{2}}}$.
	\end{itemize} 
\end{theorem}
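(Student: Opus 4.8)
The plan is to use the two preceding lemmas to reduce the set of candidate extremal graphs to just two, and then to convert the comparison of those two into a sign analysis driven by $\Delta_1$ and $\Delta_2$ through the polynomial identity recorded in the previous proof. By the first lemma, an extremal graph $G$ is isomorphic to $K_s\vee(K_{n+1-2s-k}\cup\overline{K_{s+k-1}})$ for some integer $s$ with $t\le s\le\frac{n-k}{2}$. By the second lemma, every intermediate $s$ with $t+1\le s\le\frac{n-k-2}{2}$ satisfies $\rho_\alpha(s)<\max\{\rho_\alpha(t),\rho_\alpha(\frac{n-k}{2})\}$, and for $t\in\{\frac{n-k-2}{2},\frac{n-k}{2}\}$ this range is empty; so the maximum over $s$ is attained at $s=t$ or $s=\frac{n-k}{2}$, and it remains only to compare $\rho_\alpha(t)=\rho_\alpha(K_t\vee(K_{n+1-2t-k}\cup\overline{K_{t+k-1}}))$ with $\rho_\alpha(\frac{n-k}{2})=\rho_\alpha(K_{\frac{n-k}{2}}\vee\overline{K_{\frac{n+k}{2}}})$.

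To make this comparison I would set $s=t$ in the identity $f_1(x,n,s,k,\alpha)-\bigl(x-\frac{n-k-2(1-\alpha)s}{2}\bigr)f_2(x,n,k,\alpha)=\frac{n-2s-k}{8}\,h(x,n,s,k,\alpha)$ and evaluate at $x=\rho_\alpha(\frac{n-k}{2})$, where $f_2$ vanishes, obtaining $f_1(\rho_\alpha(\frac{n-k}{2}),n,t,k,\alpha)=\frac{n-2t-k}{8}\,h(\rho_\alpha(\frac{n-k}{2}),n,t,k,\alpha)$. Substituting the closed form $\rho_\alpha(\frac{n-k}{2})=\frac14(n-k-2+2\alpha n+g_2)$, with $g_2$ the square root appearing in its formula, into $h$ and comparing coefficients identifies $h(\rho_\alpha(\frac{n-k}{2}),n,t,k,\alpha)$ with $\Delta_1(n,t,k,\alpha)$. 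Since $n-2t-k>0$ for $t<\frac{n-k}{2}$ (and the two graphs coincide when $t=\frac{n-k}{2}$), the sign of $f_1$ at $\rho_\alpha(\frac{n-k}{2})$ equals the sign of $\Delta_1$.

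Since $f_1(\cdot,n,t,k,\alpha)$ is the characteristic polynomial of the equitable quotient matrix $M$ of $A_\alpha(K_t\vee(K_{n+1-2t-k}\cup\overline{K_{t+k-1}}))$, it has real roots $\lambda_1\ge\lambda_2\ge\lambda_3$ with $\lambda_1=\rho_\alpha(t)$ and $f_1(\rho)=(\rho-\lambda_1)(\rho-\lambda_2)(\rho-\lambda_3)$, writing $\rho:=\rho_\alpha(\frac{n-k}{2})$. The principal $2\times2$ submatrix of $M$ on the last two parts is $\mathrm{diag}(n-2t-k+\alpha t,\alpha t)$, so Cauchy interlacing (applied to the symmetric matrix similar to $M$) gives $\lambda_3\le\alpha t$ and $\lambda_2\le n-2t-k+\alpha t$. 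With $\rho>\frac{n-k-2}{2}$ from the previous lemma, this yields $\rho>\lambda_3$ unconditionally, so $\operatorname{sign}f_1(\rho)=\operatorname{sign}[(\rho-\lambda_1)(\rho-\lambda_2)]$. To position $\rho$ relative to $\lambda_2$, I would first check $f_1(n-2t-k+\alpha t,n,t,k,\alpha)<0$, which by interlacing places $n-2t-k+\alpha t$ strictly between $\lambda_2$ and $\lambda_1$, and then show that $\rho\ge n-2t-k+\alpha t$ is equivalent, after substituting the closed form and squaring away $g_2$, to $\Delta_2(n,t,k,\alpha)\ge0$. Hence $\Delta_2\ge0$ forces $\rho>\lambda_2$.

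It then remains to read off the three cases from $f_1(\rho)=(\rho-\lambda_1)(\rho-\lambda_2)(\rho-\lambda_3)$ together with $\rho>\lambda_3$. If $\Delta_1<0$, then $f_1(\rho)<0$ forces $\lambda_2<\rho<\lambda_1$, i.e.\ $\rho<\rho_\alpha(t)$, needing no control on $\Delta_2$ (which is why (i) omits it). If $\Delta_1=0$ and $\Delta_2\ge0$, then $f_1(\rho)=0$ with $\rho>\lambda_2\ge\lambda_3$, forcing $\rho=\lambda_1=\rho_\alpha(t)$. If $\Delta_1>0$ and $\Delta_2\ge0$, then $f_1(\rho)>0$ with $\rho>\lambda_2$ gives $\rho-\lambda_1>0$, i.e.\ $\rho>\rho_\alpha(t)$. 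In each case the equality characterization is inherited from the first lemma. The main obstacle is the algebra behind this last mechanism: confirming the identification $h(\rho_\alpha(\frac{n-k}{2}),\cdot)=\Delta_1$, the sign $f_1(n-2t-k+\alpha t,\cdot)<0$, and above all that $\Delta_2\ge0$ is precisely the square-root-cleared form of $\rho\ge n-2t-k+\alpha t$ --- all finite but lengthy symbolic computations over the stated parameter ranges.
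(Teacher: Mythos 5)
Your proposal is correct and follows essentially the same route as the paper's proof: reduce to comparing $\rho_{\alpha}(t)$ with $\rho_{\alpha}\left(\frac{n-k}{2}\right)$ via the two preceding lemmas, identify $h\left(\rho_{\alpha}\left(\frac{n-k}{2}\right),n,t,k,\alpha\right)$ with $\Delta_1$ through the same identity $f_1-\left(x-\frac{n-k-2(1-\alpha)t}{2}\right)f_2=\frac{n-2t-k}{8}h$, bound the second largest root of $f_1$ by $n-(2-\alpha)t-k$ via Cauchy interlacing, translate $\rho_{\alpha}\left(\frac{n-k}{2}\right)\geq n-(2-\alpha)t-k$ into $\Delta_2\geq0$, and read off the three cases from the sign of $f_1$ at $\rho_{\alpha}\left(\frac{n-k}{2}\right)$. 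Your extra check that $f_1(n-(2-\alpha)t-k,n,t,k,\alpha)<0$ is a refinement the paper omits (it makes the strict separation from $\lambda_2$ explicit in case (ii)) but does not change the method.
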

\begin{proof}
	Note that $K_t\vee\left( K_{n+1-2t-k}\cup\overline{K_{t+k-1}}\right)\cong K_{\frac{n-k}{2}}\vee \overline{K_{\frac{n+k}{2}}}$ if $t=\frac{n-k}{2}$, then we consider the case $t\leq \frac{n-k-2}{2}$, i.e., $n\geq 2t+k+2$. Similarly, it follows that
	\begin{equation*}
		\begin{split}
			&~~~~f_{1}(x,n,t,k,\alpha)-\left(x-\frac{n-k-2(1-\alpha)t}{2} \right)f_2(x,n,k,\alpha)\\
			&=\frac{n-2t-k}{8}\cdot([4k+4t-4-2\alpha(n+k-2)]x- (12\alpha-8)(1-\alpha)t^2\\
			&-[(2n-10k+12)\alpha^2+(18k+2n-24)\alpha-8k+8]t\\
			&-(n-k)[(1-\alpha)(n+k)-2\alpha(n-1)])\\
			&=\frac{n-2t-k}{8}h(x,n,t,k,\alpha).
		\end{split}
	\end{equation*}

Plugging $\rho_{\alpha}\left( \frac{n-k}{2}\right)$ into $h(x,n,t,k,\alpha)$ yields
	\begin{equation*}
		\begin{split}
			&~~~~h\left( \rho_{\alpha}\left( K_{\frac{n-k}{2}}\vee \overline{K_{\frac{n+k}{2}}}  \right),n,t,k,\alpha\right)\\
			&=\left(k+t-1-\frac{\alpha}{2}(n+k)+\alpha \right)\sqrt{(n-k-2+2\alpha n)^2+4(n^2-k^2)-4\alpha(3n+k-2)(n-k)}\\
			&+\left[ 12t^2+(10k-2n-12)t+2n-kn-n^2\right]\alpha^2+\left[ (24-18k)t-20t^2-\frac{k^2}{2}+2k+\frac{5}{2}n^2\right.\\
			&\left.-2n-2\right]\alpha+k^2-n^2+(k+t-1)(n+8t-k-2)\\
			&\triangleq\Delta_1(n,t,k,\alpha).
		\end{split}
	\end{equation*}

By Cauchy's Interlace Theorem \cite{Hwang}, one can see that the second largest root of $f_{1}(x,n,t,k,\alpha)$ is no bigger than $n-(2-\alpha)t-k$. By a calculation, we have
\begin{equation*}
	\begin{split}
		&\rho_{\alpha}\left( K_{\frac{n-k}{2}}\vee \overline{K_{\frac{n+k}{2}}}  \right)-[n-(2-\alpha)t-k]\geq0\\
		&\Leftrightarrow \sqrt{(n-k-2+2\alpha n)^2+4(n^2-k^2)-4\alpha(3n+k-2)(n-k)}\geq 4[n-(2-\alpha)t-k]-(n-k-2+2\alpha n)\\
		&\Leftrightarrow \Delta_2(n,t,k,\alpha)\geq 0.
	\end{split}
\end{equation*}

(i) If $\Delta_1(n,t,k,\alpha)<0$, we have
\begin{equation*}
	\begin{split}
		&~~~~f_{1}\left( \rho_{\alpha}\left( K_{\frac{n-k}{2}}\vee \overline{K_{\frac{n+k}{2}}}  \right),n,t,k,\alpha\right) -\left(\rho_{\alpha}\left( K_{\frac{n-k}{2}}\vee \overline{K_{\frac{n+k}{2}}}  \right)-\frac{n-k-2(1-\alpha)t}{2} \right)\\
		&\times f_2\left( \rho_{\alpha}\left( K_{\frac{n-k}{2}}\vee \overline{K_{\frac{n+k}{2}}}  \right),n,k,\alpha\right) \\
		&=\frac{n-2t-k}{8}h\left( \rho_{\alpha}\left( K_{\frac{n-k}{2}}\vee \overline{K_{\frac{n+k}{2}}}  \right),n,t,k,\alpha\right)\\
		&<0,
	\end{split}
\end{equation*}
implying $f_{1}\left( \rho_{\alpha}\left( K_{\frac{n-k}{2}}\vee \overline{K_{\frac{n+k}{2}}}  \right),n,t,k,\alpha\right)<0$. Hence, $\rho_{\alpha}\left( K_t\vee\left( K_{n+1-2t-k}\cup\overline{K_{t+k-1}} \right) \right)>\rho_{\alpha}\left( K_{\frac{n-k}{2}}\vee \overline{K_{\frac{n+k}{2}}}  \right)$.

(ii) If $\Delta_1(n,t,k,\alpha)=0$ and $\Delta_2(n,t,k,\alpha)\geq0$, we obtain
\begin{equation*}
	\begin{split}
		&~~~~f_{1}\left( \rho_{\alpha}\left( K_{\frac{n-k}{2}}\vee \overline{K_{\frac{n+k}{2}}}  \right),n,t,k,\alpha\right) -\left(\rho_{\alpha}\left( K_{\frac{n-k}{2}}\vee \overline{K_{\frac{n+k}{2}}}  \right)-\frac{n-k-2(1-\alpha)t}{2} \right)\\
		&\times f_2\left( \rho_{\alpha}\left( K_{\frac{n-k}{2}}\vee \overline{K_{\frac{n+k}{2}}}  \right),n,k,\alpha\right) \\
		&=\frac{n-2t-k}{8}h\left( \rho_{\alpha}\left( K_{\frac{n-k}{2}}\vee \overline{K_{\frac{n+k}{2}}}  \right),n,t,k,\alpha\right)\\
		&=0,
	\end{split}
\end{equation*}
which implies that $f_{1}(\rho_{\alpha}\left( K_{\frac{n-k}{2}}\vee \overline{K_{\frac{n+k}{2}}}  \right),n,t,k,\alpha)=0$ and $\rho_{\alpha}\left( K_{\frac{n-k}{2}}\vee \overline{K_{\frac{n+k}{2}}}  \right)\geq n-(2-\alpha)t-k$. Thus, we have $\rho_{\alpha}\left( K_t\vee\left( K_{n+1-2t-k}\cup\overline{K_{t+k-1}} \right) \right)=\rho_{\alpha}\left( K_{\frac{n-k}{2}}\vee \overline{K_{\frac{n+k}{2}}}  \right)$.

(iii) If $\Delta_1(n,t,k,\alpha)>0$ and $\Delta_2(n,t,k,\alpha)\geq0$, we obtain
\begin{equation*}
	\begin{split}
		&~~~~f_{1}(x,n,t,k,\alpha)-\left(x-\frac{n-k-2(1-\alpha)t}{2} \right)f_2(x,n,k,\alpha)\\
		&\geq f_{1}\left( \rho_{\alpha}\left( K_{\frac{n-k}{2}}\vee \overline{K_{\frac{n+k}{2}}}  \right),n,t,k,\alpha\right) -\left(\rho_{\alpha}\left( K_{\frac{n-k}{2}}\vee \overline{K_{\frac{n+k}{2}}}  \right)-\frac{n-k-2(1-\alpha)t}{2} \right)\\
		&\times f_2\left( \rho_{\alpha}\left( K_{\frac{n-k}{2}}\vee \overline{K_{\frac{n+k}{2}}}  \right),n,k,\alpha\right) \\
		&=\frac{n-2t-k}{8}h\left( \rho_{\alpha}\left( K_{\frac{n-k}{2}}\vee \overline{K_{\frac{n+k}{2}}}  \right),n,t,k,\alpha\right)\\
		&>0,
	\end{split}
\end{equation*}
which implies that $f_{1}(\rho_{\alpha}\left( K_{\frac{n-k}{2}}\vee \overline{K_{\frac{n+k}{2}}}  \right),n,t,k,\alpha)>0$ and $\rho_{\alpha}\left( K_{\frac{n-k}{2}}\vee \overline{K_{\frac{n+k}{2}}}  \right)\geq n-(2-\alpha)t-k$. We deduce that
$\rho_{\alpha}\left( K_t\vee\left( K_{n+1-2t-k}\cup\overline{K_{t+k-1}} \right) \right)<\rho_{\alpha}\left( K_{\frac{n-k}{2}}\vee \overline{K_{\frac{n+k}{2}}}  \right)$.

This completes the proof.
\end{proof}

Let $\alpha=0$, the Theorem 3.4 in \cite{Wzhang} can be proved easily through Theorem \ref{them1}. 

\begin{definition}\label{def3}
	Let
	$\Delta(n,t,k)=\left( n^2-k^2\right)^2-2\left(n^2-k^2 \right)(k+t-1)(n+k+10t-4)+16t(k+t-1)^2(n+4t-k-2) $, where $n$, $t$, $k$ are three integers.
\end{definition}
\begin{corollary}\cite{Wzhang}\label{coro1}
	Let $n$, $t$, $k$ be positive integers, where $2\leq k\leq n-2$, $1\leq t\leq\frac{n-k}{2}$ and $n\equiv k$ (mod $2$). The function $\Delta(n,t,k)$ is defined as in Definition \ref{def3}. Let $G$ be a $t$-connected graph on $n$ vertices with matching number $\mu(G)\leq\frac{n-k}{2}$. Then
	\begin{itemize}
		\item[\textnormal{(\romannumeral1)}] 
		if $\Delta(n,t,k)>0$, we get
		$\rho_{0}(G)\leq\rho_{0}\left( K_t\vee\left( K_{n+1-2t-k}\cup\overline{K_{t+k-1}} \right) \right)$ 
		with equality holding if and only if $G\cong K_t\vee\left( K_{n+1-2t-k}\cup\overline{K_{t+k-1}} \right)$.
		\item[\textnormal{(\romannumeral2)}] if $\Delta(n,t,k)=0$, we get
			$\rho_{0}(G)\leq\rho_{0}\left( K_t\vee\left( K_{n+1-2t-k}\cup\overline{K_{t+k-1}} \right) \right)=\rho_{0}\left( K_{\frac{n-k}{2}}\vee \overline{K_{\frac{n+k}{2}}}  \right)$
		with equality holding if and only if $G\cong K_t\vee\left( K_{n+1-2t-k}\cup\overline{K_{t+k-1}} \right)$ or $G\cong K_{\frac{n-k}{2}}\vee \overline{K_{\frac{n+k}{2}}}$.
		\item[\textnormal{(\romannumeral3)}] if $\Delta(n,t,k)<0$, we get 
			$\rho_{0}(G)\leq\rho_{0}\left( K_{\frac{n-k}{2}}\vee \overline{K_{\frac{n+k}{2}}}  \right)$
		with equality holding if and only if $G\cong K_{\frac{n-k}{2}}\vee \overline{K_{\frac{n+k}{2}}}$.
	\end{itemize} 
\end{corollary}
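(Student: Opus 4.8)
The plan is to obtain Corollary \ref{coro1} as the specialization $\alpha = 0$ of Theorem \ref{them1}, translating the trichotomy stated through $\Delta_1(n,t,k,\alpha)$ and $\Delta_2(n,t,k,\alpha)$ of Definition \ref{def2} into the single polynomial discriminant $\Delta(n,t,k)$ of Definition \ref{def3}. First I would set $\alpha = 0$ in Definition \ref{def2}: the $\alpha^2$- and $\alpha$-terms of $\Delta_1$ drop out, leaving
\[
\Delta_1(n,t,k,0) = (k+t-1)\sqrt{(n-k-2)^2 + 4(n^2-k^2)} + \bigl[k^2 - n^2 + (k+t-1)(n+8t-k-2)\bigr],
\]
and $\Delta_2(n,t,k,0) = -n^2 + (4k+12t-4)n - 3k^2 - 12kt + 4k - 16t^2 + 8t$. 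Writing $a = k+t-1$, $D = (n-k-2)^2 + 4(n^2-k^2)$ and $C = k^2 - n^2 + (k+t-1)(n+8t-k-2)$, so that $\Delta_1(n,t,k,0) = a\sqrt{D} + C$, I would multiply by the conjugate and verify, by a direct expansion, the polynomial identity
\[
\Delta_1(n,t,k,0)\,\bigl(a\sqrt{D} - C\bigr) = a^2 D - C^2 = -\,\Delta(n,t,k).
\]
This reduces the corollary to a sign comparison, because $a^2D - C^2$ is exactly $-\Delta(n,t,k)$.

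The crucial step is then to control the sign of the conjugate factor $a\sqrt{D} - C$. If one can show $a\sqrt{D} - C > 0$ throughout the admissible range, then the identity forces $\operatorname{sign}\Delta_1(n,t,k,0) = -\operatorname{sign}\Delta(n,t,k)$, and the three cases of Theorem \ref{them1} line up with (i)--(iii) of the corollary. I would split on the sign of $C$: when $C \le 0$ the inequality $a\sqrt D > C$ is immediate since $a,\sqrt D > 0$, while when $C > 0$ it is equivalent to $a^2 D > C^2$, i.e.\ to $\Delta(n,t,k) < 0$. I expect this to be the main obstacle, since as $t$ grows toward $\tfrac{n-k}{2}$ the quantity $C = a\bigl[(n-k-2)+8t\bigr] - (n^2-k^2)$ becomes large and competes with $a\sqrt D$; the inequality therefore has to be established carefully from $2 \le k \le n-2$, $1 \le t \le \tfrac{n-k}{2}$ and $n\equiv k \pmod 2$, presumably by bounding $D$ from below, $C$ from above, and reducing to the extreme admissible values of $n$ and $t$.

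Next I would confirm the hypothesis $\Delta_2(n,t,k,0)\ge 0$ required by parts (ii) and (iii) of Theorem \ref{them1}, which is only invoked in the regime $\Delta(n,t,k)\le 0$, i.e.\ $\Delta_1(n,t,k,0)\ge 0$. As noted in the proof of Theorem \ref{them1}, $\Delta_2(n,t,k,0)\ge 0$ is equivalent to $\rho_{0}\bigl(K_{\frac{n-k}{2}}\vee\overline{K_{\frac{n+k}{2}}}\bigr)\ge n-2t-k$, where $\rho_0\bigl(K_{\frac{n-k}{2}}\vee\overline{K_{\frac{n+k}{2}}}\bigr) = \tfrac{1}{4}\bigl((n-k-2)+\sqrt{D}\bigr)$. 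Since $\Delta(n,t,k)$ has positive leading coefficient in $n$, the condition $\Delta(n,t,k)\le 0$ confines $n$ to a bounded window determined by $t$ and $k$; within this window the downward parabola $\Delta_2(n,t,k,0)$ can be checked to remain nonnegative by evaluating it at the endpoints of the window, which is a routine finite verification.

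Finally I would assemble the trichotomy. Using the identity together with $a\sqrt D - C > 0$: if $\Delta(n,t,k) > 0$ then $\Delta_1(n,t,k,0) < 0$, so Theorem \ref{them1}(i) gives conclusion (i); if $\Delta(n,t,k) = 0$ then $\Delta_1(n,t,k,0) = 0$, and with $\Delta_2(n,t,k,0)\ge 0$ Theorem \ref{them1}(ii) gives conclusion (ii); and if $\Delta(n,t,k) < 0$ then $\Delta_1(n,t,k,0) > 0$, and with $\Delta_2(n,t,k,0)\ge 0$ Theorem \ref{them1}(iii) gives conclusion (iii). The degenerate case $t = \tfrac{n-k}{2}$, in which $K_t\vee(K_{n+1-2t-k}\cup\overline{K_{t+k-1}})\cong K_{\frac{n-k}{2}}\vee\overline{K_{\frac{n+k}{2}}}$, is subsumed as in Theorem \ref{them1}, completing the proof.
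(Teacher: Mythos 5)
Your overall route is the same as the paper's: specialize Theorem \ref{them1} to $\alpha=0$ and rationalize $\Delta_1(n,t,k,0)=a\sqrt{D}+C$ (with $a=k+t-1$, $D=(n-k-2)^2+4(n^2-k^2)$, $C=(k+t-1)(n+8t-k-2)-(n^2-k^2)$) against its conjugate, via the identity $\Delta_1(n,t,k,0)\,\bigl(a\sqrt{D}-C\bigr)=a^2D-C^2=-\Delta(n,t,k)$, which you verify correctly. The genuine gap is exactly the step you flag and postpone: the positivity of the conjugate $a\sqrt{D}-C$ on the whole admissible range. It cannot be ``established carefully by bounding $D$ from below and $C$ from above,'' because it is false there. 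Take $(n,t,k)=(30,13,2)$, which satisfies $2\le k\le n-2$, $1\le t\le\frac{n-k}{2}=14$, $n\equiv k$ (mod $2$), and is non-degenerate since $t<\frac{n-k}{2}$: then $a=14$, $D=4260$, $C=924$, so $C>0$ and $a^2D=834960<853776=C^2$, i.e.\ $a\sqrt{D}-C<0$. Here $\Delta(30,13,2)=18816>0$ while $\Delta_1(30,13,2,0)=a\sqrt{D}+C>0$, so the sign correspondence $\operatorname{sign}\Delta_1(n,t,k,0)=-\operatorname{sign}\Delta(n,t,k)$, on which your entire trichotomy rests, breaks down; your own case split already exposes the circularity (for $C>0$ the needed inequality is equivalent to $\Delta<0$, the very quantity whose sign is being tested). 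One can check directly that at this point $\rho_0\bigl(K_{13}\vee(K_3\cup\overline{K_{14}})\bigr)\approx22.15<22.82\approx\rho_0\bigl(K_{14}\vee\overline{K_{16}}\bigr)$, consistent with Theorem \ref{them1}(iii) (indeed $\Delta_2(30,13,2,0)=984>0$) but not with conclusion (i) of the corollary applied to this $\Delta>0$ instance.

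In fairness, the paper's own proof divides by this same conjugate --- its displayed identity has denominator $a\sqrt{D}+\bigl[n^2-k^2-(k+t-1)(n+8t-k-2)\bigr]$ --- without ever checking its sign, so you have faithfully reproduced the paper's argument, soft spot included; but since you isolated precisely this step and left it open, your proposal is incomplete as written, and the example above shows it cannot be closed in full generality (the failure occurs when $t$ is near $\frac{n-k}{2}$, so the positivity would have to be proved under a restricted parameter range or the statement re-examined). A smaller difference: for parts (ii)--(iii) the paper does not run a separate verification of $\Delta_2(n,t,k,0)\ge0$ as you propose; it shows $f_1(x,n,t,k,0)\ge0$ for all $x\ge\rho_0\bigl(K_{\frac{n-k}{2}}\vee\overline{K_{\frac{n+k}{2}}}\bigr)$ when $\Delta\le0$, concluding $\rho_0\bigl(K_t\vee(K_{n+1-2t-k}\cup\overline{K_{t+k-1}})\bigr)\le\rho_0\bigl(K_{\frac{n-k}{2}}\vee\overline{K_{\frac{n+k}{2}}}\bigr)$ directly, and obtains $\Delta_2\ge0$ as a byproduct of $\rho_0\bigl(K_{\frac{n-k}{2}}\vee\overline{K_{\frac{n+k}{2}}}\bigr)>n-2t-k$, which is cleaner than your bounded-window endpoint check.
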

\begin{proof}
	By a calculation, we obtain
	\begin{equation*}
		\begin{split}
			\Delta_1(n,t,k,0)&=\left(k+t-1 \right)\sqrt{(n-k-2)^2+4(n^2-k^2)}-\left[n^2-k^2-(k+t-1)(n+8t-k-2) \right]\\
			&=\frac{-[\left( n^2-k^2\right)^2-2\left(n^2-k^2 \right)(k+t-1)(n+k+10t-4)+16t(k+t-1)^2(n+4t-k-2)]}{\left(k+t-1 \right)\sqrt{(n-k-2)^2+4(n^2-k^2)}+\left[n^2-k^2-(k+t-1)(n+8t-k-2) \right]}\\
			&=\frac{-\Delta(n,t,k)}{\left(k+t-1 \right)\sqrt{(n-k-2)^2+4(n^2-k^2)}+\left[n^2-k^2-(k+t-1)(n+8t-k-2) \right]}.
		\end{split}
	\end{equation*}

(i) If $\Delta(n,t,k)>0$, we have $\Delta_1(n,t,k,0)<0$. Then by Theorem \ref{them1} (i), it's easy to see that $\rho_{0}\left( K_{\frac{n-k}{2}}\vee \overline{K_{\frac{n+k}{2}}}  \right)<\rho_{0}\left( K_t\vee\left( K_{n+1-2t-k}\cup\overline{K_{t+k-1}} \right) \right)$.

(ii)-(iii)
Since $4k+4t-4-2\alpha(n+k-2)>0$ for $\alpha=0$, $t\geq1$ and $k\geq2$, we obtain that for any $x\geq \rho_{0}\left( K_{\frac{n-k}{2}}\vee \overline{K_{\frac{n+k}{2}}}  \right)$,
\begin{equation*}
	\begin{split}
				&~~~~f_{1}\left( x,n,t,k,0\right) -\left(x-\frac{n-k-2t}{2} \right)f_2\left( x,n,k,0\right)\\
				&\geq f_{1}\left( \rho_{0}\left( K_{\frac{n-k}{2}}\vee \overline{K_{\frac{n+k}{2}}}  \right),n,t,k,0\right) -\left(\rho_{0}\left( K_{\frac{n-k}{2}}\vee \overline{K_{\frac{n+k}{2}}}  \right)-\frac{n-k-2t}{2} \right)\\
				&\times f_2\left( \rho_{0}\left( K_{\frac{n-k}{2}}\vee \overline{K_{\frac{n+k}{2}}}  \right),n,k,0\right) \\
				&=\frac{n-2t-k}{8}h\left( \rho_{0}\left( K_{\frac{n-k}{2}}\vee \overline{K_{\frac{n+k}{2}}}  \right),n,t,k,0\right)\\
				&=\frac{-(n-2t-k)\Delta(n,t,k)}{8\left\lbrace \left(k+t-1 \right)\sqrt{(n-k-2)^2+4(n^2-k^2)}+\left[n^2-k^2-(k+t-1)(n+8t-k-2) \right]\right\rbrace }.
	\end{split}
\end{equation*}

If $\Delta(n,t,k)\leq0$, it can be checked that $f_1(x,n,t,k,0)\geq0$ for any $x\geq \rho_{0}\left( K_{\frac{n-k}{2}}\vee \overline{K_{\frac{n+k}{2}}}  \right)$, which implies that $\rho_{0}\left( K_t\vee\left( K_{n+1-2t-k}\cup\overline{K_{t+k-1}} \right) \right)\leq\rho_{0}\left( K_{\frac{n-k}{2}}\vee \overline{K_{\frac{n+k}{2}}}  \right)$. In this case, it's obvious that $\rho_{0}\left( K_{\frac{n-k}{2}}\vee \overline{K_{\frac{n+k}{2}}}  \right)>n-(2-\alpha)t-k$, implying $\Delta_2(n,t,k,0)\geq0$ holds. 
\end{proof}

Take $t=1$ and $k=2$, the following result can be obtained by Corollary \ref{coro1}.
\begin{corollary}\cite{O21}
	Let $n\geq4$ be an even integer, and let $G$ be a connected $n$-vertex graph. Then we obtain following two conclusion.
	\begin{itemize}
		\item[\textnormal{(\romannumeral1)}] 
		For $n\geq 8$, if
		$\rho_{0}(G)>\rho_{0}\left( K_1\vee\left( K_{n-3}\cup\overline{K_{2}} \right) \right)$, then $G$ contains a perfect matching.
		\item[\textnormal{(\romannumeral2)}] For $n=4,6$, if
		$\rho_{0}(G)>\rho_{0}\left( K_{\frac{n-2}{2}}\vee \overline{K_{\frac{n+2}{2}}}  \right)$, then $G$ contains a perfect matching.
	\end{itemize} 
\end{corollary}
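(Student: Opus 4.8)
The plan is to obtain both statements as the specialization $t=1$, $k=2$ of Corollary \ref{coro1}, read through the contrapositive. First I would record the dictionary between the hypotheses. Since $n$ is even, a graph $G$ on $n$ vertices has a perfect matching precisely when $\mu(G)=\frac{n}{2}$; hence $G$ fails to have a perfect matching exactly when $\mu(G)\leq\frac{n}{2}-1=\frac{n-2}{2}$, which is the bound $\mu(G)\leq\frac{n-k}{2}$ with $k=2$. Moreover, a graph is connected if and only if it is $1$-connected, so the hypothesis that $G$ is connected is precisely the case $t=1$. Thus every connected $n$-vertex graph without a perfect matching is admissible for Corollary \ref{coro1} with $t=1$ and $k=2$, provided the numerical hypotheses hold; these read $2\leq k=2\leq n-2$ and $1\leq t=1\leq\frac{n-2}{2}$ (both amounting to $n\geq4$) together with $n\equiv k\pmod 2$, i.e.\ $n$ even, all of which are granted.

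Next I would evaluate the discriminant of Definition \ref{def3} at these parameters. With $k+t-1=2$, $n+k+10t-4=n+8$, $n+4t-k-2=n$ and $n^2-k^2=n^2-4$, one obtains
\begin{equation*}
\Delta(n,1,2)=(n^2-4)^2-4(n^2-4)(n+8)+64n=n^4-4n^3-40n^2+80n+144.
\end{equation*}
The only genuine work is to pin down the sign of this quartic on the relevant range. Evaluating directly gives $\Delta(4,1,2)=-176<0$ and $\Delta(6,1,2)=-384<0$, while $\Delta(8,1,2)=272>0$; and for even $n\geq10$ one can write $\Delta(n,1,2)=n^2(n^2-4n-40)+80n+144$ with $n^2-4n-40\geq20>0$, whence $\Delta(n,1,2)>0$. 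Hence $\Delta(n,1,2)<0$ exactly for $n\in\{4,6\}$ and $\Delta(n,1,2)>0$ for every even $n\geq8$.

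Finally I would assemble the two cases by contraposition. For $n\geq8$ we have $\Delta(n,1,2)>0$, so Corollary \ref{coro1}(i) applies to any connected $G$ with $\mu(G)\leq\frac{n-2}{2}$ and yields $\rho_{0}(G)\leq\rho_{0}(K_{1}\vee(K_{n-3}\cup\overline{K_{2}}))$, using that $K_{t}\vee(K_{n+1-2t-k}\cup\overline{K_{t+k-1}})=K_{1}\vee(K_{n-3}\cup\overline{K_{2}})$ at $t=1$, $k=2$. Contrapositively, $\rho_{0}(G)>\rho_{0}(K_{1}\vee(K_{n-3}\cup\overline{K_{2}}))$ forces $\mu(G)=\frac{n}{2}$, i.e.\ a perfect matching, which is (i). For $n\in\{4,6\}$ we have $\Delta(n,1,2)<0$, so Corollary \ref{coro1}(iii) gives $\rho_{0}(G)\leq\rho_{0}(K_{\frac{n-2}{2}}\vee\overline{K_{\frac{n+2}{2}}})$ for such $G$, and the same contrapositive yields (ii). Beyond the elementary sign determination of $\Delta(n,1,2)$ across the threshold between $n=6$ and $n=8$, there is no serious obstacle; the result is a direct numerical specialization of Corollary \ref{coro1}.
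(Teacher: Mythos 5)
Your proposal is correct and takes essentially the same route as the paper, which obtains this result by simply specializing Corollary \ref{coro1} to $t=1$, $k=2$ without spelling out the details. Your sign analysis of $\Delta(n,1,2)=n^4-4n^3-40n^2+80n+144$ (namely $\Delta(4,1,2)=-176<0$, $\Delta(6,1,2)=-384<0$, $\Delta(8,1,2)=272>0$, and $\Delta(n,1,2)=n^2(n^2-4n-40)+80n+144>0$ for even $n\geq 10$), together with the contrapositive reading via $\mu(G)\leq\frac{n-2}{2}$, correctly supplies the routine verification the paper leaves implicit.
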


By letting $t=1$, $k=2$ and $\alpha=\frac{1}{2}$ in Theorem \ref{them1}, the following corollary can be derived easily.

\begin{corollary}\cite{Liuc}
	Let $n\geq4$ be an even integer, and let $G$ be a connected $n$-vertex graph. Then we get following two conclusion.
		\begin{itemize}
		\item[\textnormal{(\romannumeral1)}] 
		For $n\geq 10$, if
		$\rho_{\frac{1}{2}}(G)>\rho_{\frac{1}{2}}\left( K_1\vee\left( K_{n-3}\cup\overline{K_{2}} \right) \right)$, then $G$ contains a perfect matching.
		\item[\textnormal{(\romannumeral2)}] For $n=4,6,8$, if
		$\rho_{\frac{1}{2}}(G)>\rho_{\frac{1}{2}}\left( K_{\frac{n-2}{2}}\vee \overline{K_{\frac{n+2}{2}}}  \right)$, then $G$ contains a perfect matching.
	\end{itemize} 
\end{corollary}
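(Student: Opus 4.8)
The plan is to derive this corollary directly from Theorem~\ref{them1} by contraposition. Suppose $G$ is a connected $n$-vertex graph with $n$ even that has no perfect matching. Then $\mu(G)<\tfrac{n}{2}$, so $\mu(G)\le\tfrac{n-2}{2}$; taking $k=2$ this reads $\mu(G)\le\tfrac{n-k}{2}$, while connectivity gives $t=1$. Thus $G$ is a $1$-connected $n$-vertex graph with matching number at most $\tfrac{n-k}{2}$, and Theorem~\ref{them1} applies with $t=1$, $k=2$, $\alpha=\tfrac{1}{2}$. Under these parameters the two candidate extremal graphs specialize to $K_1\vee(K_{n-3}\cup\overline{K_2})$ and $K_{\frac{n-2}{2}}\vee\overline{K_{\frac{n+2}{2}}}$, exactly the two graphs in the corollary. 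It therefore remains only to decide, for each even $n\ge4$, which of the three cases of Theorem~\ref{them1} is in force, i.e.\ to read off the signs of $\Delta_1(n,1,2,\tfrac{1}{2})$ and $\Delta_2(n,1,2,\tfrac{1}{2})$.

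The next step is to evaluate these two quantities from Definition~\ref{def2}. Substituting $t=1$, $k=2$, $\alpha=\tfrac{1}{2}$, the radicand in $\Delta_1$ collapses to $2n(n-2)$, the leading coefficient of the radical becomes $\tfrac{8-n}{4}$, and---crucially---the $n^2$-terms in the polynomial remainder cancel, leaving
\begin{equation*}
	\Delta_1\left(n,1,2,\tfrac{1}{2}\right)=\frac{8-n}{4}\sqrt{2n(n-2)}+\frac{n+2}{2}.
\end{equation*}
A parallel substitution in $\Delta_2$ yields the downward quadratic
\begin{equation*}
	\Delta_2\left(n,1,2,\tfrac{1}{2}\right)=-\frac{n^2}{2}+9n-25.
\end{equation*}
I would record both simplifications as the computational heart of the argument, since everything else follows from their signs.

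It then remains to analyze these expressions. For $n\in\{4,6,8\}$ I would simply evaluate, obtaining $\Delta_1>0$ and $\Delta_2>0$ in each case, so Theorem~\ref{them1}(iii) applies and $\rho_{1/2}(G)\le\rho_{1/2}(K_{\frac{n-2}{2}}\vee\overline{K_{\frac{n+2}{2}}})$; contraposition then gives part (ii) of the corollary. For $n\ge10$ the coefficient $\tfrac{8-n}{4}$ is negative, and I would show $\Delta_1<0$, which puts us in Theorem~\ref{them1}(i) and gives $\rho_{1/2}(G)\le\rho_{1/2}(K_1\vee(K_{n-3}\cup\overline{K_2}))$; contraposition gives part (i). Since case (i) imposes no hypothesis on $\Delta_2$, the fact that $\Delta_2$ eventually turns negative (for $n\ge16$) is immaterial to the argument.

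The one genuinely non-routine point is verifying $\Delta_1(n,1,2,\tfrac{1}{2})<0$ for every even $n\ge10$. Because $\tfrac{8-n}{4}<0$ there, this is equivalent to $(n-8)\sqrt{2n(n-2)}>2(n+2)$, and since both sides are positive I would square to reduce it to the polynomial inequality $2n(n-2)(n-8)^2>4(n+2)^2$. This is a quartic whose leading term $2n^4$ dominates; checking that it holds at $n=10$ (where the two sides equal $640$ and $576$) and bounding the difference for $n>10$ finishes the case. The split at $n=10$ is exactly where one expects it, since the direct evaluation above already gives $\Delta_1>0$ at $n=8$, matching the threshold in the corollary.
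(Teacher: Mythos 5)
Your proposal is correct and follows exactly the route the paper intends: the paper proves this corollary only by the remark that one lets $t=1$, $k=2$, $\alpha=\tfrac{1}{2}$ in Theorem~\ref{them1}, and you carry out precisely that specialization. Your simplifications $\Delta_1\left(n,1,2,\tfrac{1}{2}\right)=\tfrac{8-n}{4}\sqrt{2n(n-2)}+\tfrac{n+2}{2}$ and $\Delta_2\left(n,1,2,\tfrac{1}{2}\right)=-\tfrac{n^2}{2}+9n-25$ check out, the sign analysis (cases (iii) for $n=4,6,8$, case (i) for even $n\geq10$, with the threshold verified via $640>576$ at $n=10$) is accurate, and your observation that the eventual negativity of $\Delta_2$ for $n\geq16$ is irrelevant in case (i) is correct.
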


\section*{Declaration of competing interest}
The authors declare that they have no known competing financial interests or personal relationships that could have appeared to influence the work reported in this paper.

\section*{Acknowledgments}\setlength{\baselineskip}{15pt}
This work was supported by the National Natural Science Foundation of China [61773020]. The authors would like to express their sincere gratitude to the referees for their careful reading and insightful suggestions.


\begin{thebibliography}{99}
	\bibitem{Brou} A.E. Brouwer, W.H. Haemers, Eigenvalues and perfect matchings, Linear Algebra Appl. 395 (2005) 155–162.
	\bibitem{aebrouw} A.E. Brouwer, W.H. Haemers, Spectra of Graphs - Monograph, Springer, 2011.
	\bibitem{Chang} A. Chang, Bounds on the second largest eigenvalue of a tree with perfect matchings, Linear Algebra Appl. 283 (1998) 247–255.
	\bibitem{Cioa1} S.M. Cioab\v{a}, S. O, Edge-connectivity, eigenvalues, and matchings in regular graphs, SIAM J. Discrete Math. 24 (2010) 1470–1481.
	\bibitem{Cioa2} S.M. Cioab\v{a}, D. Gregory, W.H. Haemers, Matchings in regular graphs from eigenvalues, J. Comb. Theory, Ser. B 99 (2009) 287–297.
	\bibitem{Feng} L. Feng, G. Yu, X. Zhang, Spectral radius of graphs with given matching number, Linear Algebra Appl. 422 (2007) 133–138.
	\bibitem{Hou} Y. Hou, J. Li, Bounds on the largest eigenvalues of trees with a given size of matching, Linear Algebra Appl. 342 (2002) 203–217.
	\bibitem{Hwang} S.G. Hwang, Cauchy's interlace theorem for eigenvalues of Hermitian matrices. AM MATH MON. 111 (2004) 157-159.
	
	\bibitem{Lin1} H. Lin, X. Huang, J. Xue, A note on the $A_{\alpha}$-spectral radius of graphs, Linear Algebra Appl. 557 (2018) 430–437.
	\bibitem{Lin2} H. Lin, X. Liu, J. Xue, Graphs determined by their $A_{\alpha}$-spectra, Discrete Math. 342 (2) (2019) 441–450.
	\bibitem{Lin3} H. Lin, J. Xue, J. Shu, On the $A_{\alpha}$-spectra of graphs, Linear Algebra Appl. 556 (2018) 210–219.
	\bibitem{Liu1} X. Liu, S. Liu, On the $A_{\alpha}$-characteristic polynomial of a graph, Linear Algebra Appl. 546 (2018) 274–288.
	\bibitem{Liuc} C. Liu, Y. Pan, J. Li, Signless Laplacian spectral radius and matching in graphs, arXiv:2007.04479, 2020.
	\bibitem{Niki1} V. Nikiforov, Merging the $A$- and $Q$-spectral theories, Appl. Anal. Discrete Math. 11 (2017) 81–107.
	\bibitem{Niki2} V. Nikiforov, G. Pasten, O. Rojo, R.L. Soto, On the $A_{\alpha}$-spectra of trees, Linear Algebra Appl. 520
	(2017) 286–305.
	\bibitem{Niki3} V. Nikiforov, O. Rojo, A note on the positive semidefiniteness of $A_{\alpha}(G)$, Linear Algebra Appl. 519
	(2017) 156–163.
	\bibitem{O21} S. O, Spectral radius and matchings in graphs, Linear Algebra Appl. 614 (2021) 316–324.
	\bibitem{lhYou} L.H. You, M. Yang, W. So, W.G. Xi, On the spectrum of an equitable matrix and its application, \textit{Linear Algebra Appl}. 577 (2019) 21-40.
	\bibitem{Zhao} Y. Zhao, X. Huang, Z. Wang, The $A_{\alpha}$-spectral radius and perfect matchings of graphs, Linear Algebra Appl. 631 (2021) 143-155.
	\bibitem{Wzhang} W. Zhang, The maximum spectral radius of $t$-connected graphs with bounded matching number. Discrete Math. 345 (2022) 112775.
	
	
	
	
	


	
	
%
    
%
%
%
%
%
%
%
%
      
%
%
%
%
%
%
    
  
    
    
    
    

   
	
	
	
	
	
	
	
	
	
	
	
	
	
	
	
	
	
	
	
	
	
	
	
	
	
	
	
	
	
	
	
	
	
	
	
	
	
	
	
	
	
	
	
	
	
	
	
	
	
	
	
	

\end{thebibliography}
\end{document}